\pgfplotsset{compat=1.15}
\newcommand{\bZ}{\mathbb{Z}}
\newcommand{\bR}{\mathbb{R}}
\newtheorem{theorem}{Theorem}
\newtheorem{lemma}{Lemma}
\newtheorem{claim}{Claim}
\newtheorem*{isoperimetriclemma}{Lemma~\ref{isoperimetric}}
\title{A tight bound for the number of edges of matchstick graphs}
\author{J\'er\'emy Lavoll\'ee and Konrad Swanepoel}
\subjclass[2020]{Primary 52C10. Secondary 05C10}
\keywords{matchstick graph, penny graph, plane unit-distance graph}
\begin{document}

\maketitle
\begin{abstract}
    A matchstick graph is a plane graph with edges drawn as unit-distance line segments.
    Harborth introduced these graphs in 1981 and conjectured that the maximum number of edges for a matchstick graph on $n$ vertices is $\lfloor 3n-\sqrt{12n-3} \rfloor$.
    In this paper we prove this conjecture for all $n\geq 1$.
    The main geometric ingredient of the proof is an isoperimetric inequality related to L'Huilier's inequality.
\end{abstract}

\section{Introduction}
A \emph{matchstick graph} is a graph drawn in the Euclidean plane with each edge a straight unit-length segment, such that two edges only intersect in a common endpoint. 
These graphs were introduced by Harborth \cites{oberwolfach, lighter} in 1981.
Most of the literature deals with matchstick graphs which are regular \cites{blokhuis, gerbracht0, gerbracht, kurz-4-regular, kurz, KM, winkler, WDV3} or almost regular \cites{LS2, winkler2, WDV0, WDV, WDV2}.
There are also a few papers dealing with more general aspects, such as enumeration \cites{salvia} and algorithmic recognition \cites{abel, kurz-recognition}.
Harborth considered the extremal problem of finding the minimum number $n$ of vertices in a matchstick graph on $e$ edges, or equivalently, the maximum number of edges for a given number of vertices.
He conjectured in \cite{lighter} that $e\leq 3n-\sqrt{12n-3}$ (see also \cite{research}*{p.~225}) and proved this in the special case of penny graphs using a neat induction on the number of vertices \cites{harborth} (see also \cite{PachAgarwal}*{Theorem~13.12}).
A \emph{penny graph} is a matchstick graph with the additional property that the circles of radius $1/2$ around the vertices are non-overlapping.

\begin{theorem}[Harborth \cite{harborth}]\label{thm0}
Let $G$ be a matchstick graph on $n$ vertices such that the distance between any two vertices is at least $1$.
Then the number of edges of $G$ satisfies $e\leq 3n-\sqrt{12n-3}$.
\end{theorem}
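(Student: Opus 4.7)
The plan is to combine Euler's formula with an isoperimetric-type inequality for the outer boundary of the planar embedding.

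Let $p$ denote the number of edges on the boundary of the outer face of the matchstick embedding of $G$ (counting bridges twice). Since $G$ is simple and planar, every inner face is bounded by at least three edges, so summing face degrees yields $2e \ge p + 3(f-1)$. Combining with Euler's identity $n - e + f = 2$ gives the standard inequality
\[
e \le 3n - 3 - p.
\]
It therefore suffices to establish the isoperimetric estimate
\[
p \ge \sqrt{12n - 3} - 3,
\qquad\text{equivalently,}\qquad 12 n \le (p+3)^2 + 3,
\]
after which $e \le 3n - 3 - (\sqrt{12n - 3} - 3) = 3n - \sqrt{12n - 3}$ as desired.

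To prove the isoperimetric estimate I would use three ingredients: (i) the convex hull $H$ of $V(G)$ is enclosed by the outer boundary, so $\mathrm{perim}(H) \le p$; (ii) Oler's inequality, which says that $n$ points with pairwise distance at least $1$ in a convex set $K$ satisfy $n \le \frac{2\,\mathrm{area}(K)}{\sqrt 3} + \frac{\mathrm{perim}(K)}{2} + 1$, and which is already \emph{tight} for hexagonal pieces of the triangular lattice; and (iii) an upper bound on $\mathrm{area}(H)$ that reflects the fact that the outer boundary is a polygon of unit-length edges, not a smooth curve. A preliminary reduction, using subadditivity of $\sqrt{12n-3}$, lets us assume $G$ is connected (and ideally $2$-connected, so that the outer face is bounded by a simple cycle).

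The main obstacle is ingredient (iii). Using only the classical isoperimetric inequality $\mathrm{area}(H) \le \mathrm{perim}(H)^2/(4\pi)$ yields $n \le \frac{p^2}{2\pi\sqrt 3} + \frac p 2 + 1$, whose leading constant $1/(2\pi\sqrt 3) \approx 0.092$ is strictly larger than the required $1/12 \approx 0.083$. This reflects the fact that a disk beats a hexagon in the classical isoperimetric sense, whereas the extremal matchstick graphs are hexagonal pieces of the triangular lattice, for which $p = 6k$ and $n = 3k^2 + 3k + 1$ saturate the bound. Closing this gap requires a sharpening of the isoperimetric inequality that exploits the polygonal structure of the outer boundary (unit edges, angle constraints inherited from the penny condition)---precisely the kind of refinement, related to L'Huilier's inequality, singled out in the abstract of the paper.
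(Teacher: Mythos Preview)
The paper does not give its own proof of this theorem: it is quoted as Harborth's result and attributed to \cite{harborth}, with the remark that Harborth ``proved this in the special case of penny graphs using a neat induction on the number of vertices'' (see also \cite{PachAgarwal}*{Theorem~13.12}). So there is no paper-proof to compare against beyond that description; the relevant comparison is between your Euler--Oler--isoperimetric route and the classical inductive argument.

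Your approach is genuinely different from the cited one, and the gap you flag is real and not a technicality. You correctly compute that Oler's inequality combined with the classical isoperimetric bound $A\le P^2/(4\pi)$ yields $n\le p^2/(2\pi\sqrt3)+p/2+1$, whose leading constant $1/(2\pi\sqrt3)\approx0.092$ exceeds the required $1/12$. You then hope that an L'Huilier-type refinement will close this, but note that the paper's Lemma~\ref{isoperimetric} needs the boundary edges to be (mostly) parallel to the sides of a fixed regular hexagon. For a general penny graph this is not given: a $2$-connected penny graph need not sit on a single triangular lattice, and the paper only invokes Lemma~\ref{isoperimetric} for its main theorem after a long argument (Claims~\ref{lattice-boundary}--\ref{b*upperbound}) showing that most of the boundary lies on one lattice component. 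So the ingredient you are missing is not available for free, and without some replacement your argument does not conclude.

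The standard route is the induction you bypassed. In a penny graph, every convex-hull vertex has interior angle strictly less than $\pi$, while consecutive neighbours subtend an angle of at least $\pi/3$; so every hull vertex has degree at most $3$, and a short averaging/boundary argument lets one peel vertices and run the induction (see \cite{harborth} or \cite{PachAgarwal}*{Theorem~13.12} for the details). If you want to salvage the isoperimetric strategy, you would need an area bound for the outer polygon that genuinely uses the penny condition on \emph{all} vertices, not just the boundary ones --- Oler's inequality applied directly to the convex hull already encodes this, but then you must not weaken it through the circle-isoperimetric step.
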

For each $n\geq 1$ there are examples on the triangular lattice with $n$ vertices and the optimal $\lfloor 3n-\sqrt{12n-3}\rfloor$ edges \cites{harborth, PachAgarwal}.
Proving Harborth's conjecture in general turns out to be trickier.
Using the isoperimetric inequality (Lemma~\ref{isoperimetric0}) we showed in \cite{lavollee} that $e\leq 3n-c\sqrt{12n-3}$, where $c=\frac12(1+\sqrt{\pi\sqrt{3}/6})=0.976\dots$, which implies Harborth's conjecture for some small values of $n$.
In this paper we settle the conjecture completely.
\begin{theorem}\label{thm1}
Let $G$ be a matchstick graph with $n$ vertices and $e$ edges.
Then $e \leq 3n-\sqrt{12n-3}$.
\end{theorem}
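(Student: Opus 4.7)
The plan is to combine Euler's formula with the isoperimetric inequality of Lemma~\ref{isoperimetric}, and then do a short algebraic check. First I would reduce to a connected matchstick graph $G$. Let $\ell$ be the length of the outer face, $t$ the number of triangular interior faces, and $k_1,\dots,k_q\ge 4$ the sizes of the non-triangular interior faces; set $s=\sum_{i}(k_i-4)\ge 0$. Euler's formula $n-e+f=2$ combined with the face--edge incidence $2e=3t+\sum_i k_i+\ell$ gives the clean identity
\[
e=3n-3-\ell-q-s,
\]
so Theorem~\ref{thm1} becomes equivalent to the single inequality
\[
(\ell+q+s+3)^{2}\;\ge\;12n-3.
\]

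The geometric input comes from the fact that each triangular face is equilateral of area $\sqrt{3}/4$, so the area covered by the triangular faces of $G$ equals $\tfrac{\sqrt{3}}{4}\,t=\tfrac{\sqrt{3}}{4}(e-n+1-q)$. I expect Lemma~\ref{isoperimetric} to supply the matching upper bound $\tfrac{\sqrt{3}}{4}t\le \ell^{2}/(8\sqrt{3})$, equivalently $t\le \ell^{2}/6$; here $1/(8\sqrt{3})=1/\bigl(4\cdot 6\tan(\pi/6)\bigr)$ is the L'Huilier constant of a regular hexagon, and hexagonal pieces of the triangular lattice (Harborth's extremal examples) attain equality. Substituting $e=3n-3-\ell-q-s$ into $t\le \ell^{2}/6$ rearranges to $12n-3\le \ell^{2}+6\ell+12q+6s+9$, and a routine algebraic check gives $(\ell+q+s+3)^{2}\ge \ell^{2}+6\ell+12q+6s+9$ whenever $\ell\ge 3$, closing the loop. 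A handful of small or degenerate cases (trees, outer face shorter than three, very small $n$) would be checked by hand.

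The main obstacle is proving Lemma~\ref{isoperimetric}. The classical planar isoperimetric inequality $A\le \ell^{2}/(4\pi)$ is only enough to deliver the weaker constant $c=\tfrac12(1+\sqrt{\pi\sqrt{3}/6})<1$ obtained in \cite{lavollee}, and the required improvement from $1/(4\pi)\approx 0.0796$ to $1/(8\sqrt{3})\approx 0.0722$ cannot hold for unit-edge polygons in general — a regular heptagon already violates it. The lemma must therefore exploit extra structure coming from the matchstick graph itself, most plausibly the fact that in the triangulated part of $G$ the angles around each vertex are integer multiples of $\pi/3$. A natural route is a Gauss--Bonnet-style accounting of angular defects along the boundary of the triangulated region, combined with a L'Huilier-type variational argument pinning down hexagonal lattice pieces as the extremal shape. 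Handling the non-triangular interior faces of $G$ — which can be arbitrarily thin and can locally distort the boundary of the triangulated region — is where I expect most of the technical work to go.
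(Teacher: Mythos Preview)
Your Euler-formula bookkeeping is correct and matches the paper's set-up exactly: with the paper's notation $b=\ell$ and $F=\sum_{i\ge 4}(i-3)f_i=q+s$, the identity $e=3n-3-b-F$ and the reformulation $(\ell+q+s+3)^2\ge 12n-3$ are precisely what is needed, and your final algebraic check is fine.

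The gap is in your reading of Lemma~\ref{isoperimetric}. It does \emph{not} assert $8\sqrt{3}A\le b^2$ for the outer boundary of a matchstick graph; it asserts $8\sqrt{3}A\le\bigl(b+(\tfrac{2}{\sqrt{3}}-1)b_*\bigr)^2$ for an arbitrary simple polygon, where $b_*$ is the total length of sides not parallel to a fixed hexagon's directions. This is a purely geometric statement with a short self-contained proof (rearrange to a convex polygon, circumscribe a hexagon, use the cosine rule at the $120^\circ$ corners). The inequality you actually need, $t\le \ell^2/6$, would follow only if $b_*=0$, i.e.\ if every boundary edge of $G$ were parallel to one of three fixed directions, and nothing so far guarantees that.

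Almost all of the paper's work goes into controlling $b_*$, and it is done by induction, not by a direct Gauss--Bonnet or angular-defect argument. The classical isoperimetric inequality is first used to get a crude bound $F<\tfrac{1}{11}\sqrt{12n-3}$ and to dispose of all $n<147$. One then defines \emph{lattice components} (maximal $2$-connected subgraphs on a single triangular lattice), shows they cover all but $O(F)$ vertices with $O(F)$ overlap, and proves that the largest one, $G_1$, has $n_1>3n/4$. Splitting $G$ into $G_1$ and an enlargement of $G-G_1$ and applying the induction hypothesis yields a lower bound on $F$ in terms of $\sqrt{12n-3}-\sqrt{12n_1-3}$, and a separate count shows $b_*<\sqrt{12n-3}-\sqrt{12n_1-3}$ as well. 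Only now does Lemma~\ref{isoperimetric} enter, producing a sharp upper bound $F<\tfrac{1}{6}(\sqrt{12n-3}-\sqrt{12n_1-3})+\tfrac12$; the two bounds on $F$ are then played off against each other for the final contradiction. Your proposed direct route to $t\le \ell^2/6$ bypasses all of this structure, and there is no indication such a bound holds without it.
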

As in our previous paper \cite{lavollee}, as a first step we use the Euler formula and the isoperimetric inequality.
\begin{lemma}[Isoperimetric inequality]\label{isoperimetric0}
For any simple polygon of perimeter $b$ and area $A$ we have $4\pi A < b^2$.
\end{lemma}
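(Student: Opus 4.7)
My plan is to reduce to convex polygons and then apply the polygonal isoperimetric inequality of L'Huilier. Given a simple polygon $P$ with perimeter $b$ and area $A$, I would first pass to the convex hull $Q = \operatorname{conv}(P)$, which is again a polygon whose vertices form a subset of the vertices of $P$. Replacing each concave arc of $\partial P$ by the corresponding chord shortens the boundary and enlarges the enclosed region, so the perimeter $b_Q$ of $Q$ satisfies $b_Q \leq b$ and the area $A_Q$ of $Q$ satisfies $A_Q \geq A$. It therefore suffices to prove $4\pi A_Q < b_Q^2$ under the assumption that $Q$ is convex.

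For a convex $n$-gon with perimeter $b$ and area $A$, L'Huilier's inequality asserts that
\[
b^{2} \geq 4n \tan(\pi/n)\cdot A,
\]
with equality iff the polygon is regular. This is a classical statement equivalent to the fact that, among $n$-gons of given perimeter, the regular $n$-gon maximises the area (one standard proof proceeds by Steiner symmetrization, repeatedly reflecting across perpendicular bisectors of sides).

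Finally, since $\tan x > x$ on $(0,\pi/2)$, one has $n\tan(\pi/n) > \pi$ for every integer $n \geq 3$. Substituting this bound into L'Huilier's inequality applied to $Q$ yields $b_Q^{2} > 4\pi A_Q$, and combining with the convex-hull reduction gives $b^{2} \geq b_Q^{2} > 4\pi A_Q \geq 4\pi A$, as required.

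The main obstacle is L'Huilier's inequality itself, since the extremal property of regular polygons is non-trivial. I would simply cite it from a standard reference rather than reprove it. An even shorter alternative is to invoke the full Jordan-curve isoperimetric inequality $4\pi A \leq L^{2}$ together with its equality case (circles only): since no polygon is a circle, strictness is immediate. Either route should suffice for the use that this lemma will receive in the remainder of the paper.
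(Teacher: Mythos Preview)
Your argument is correct. Note, however, that the paper does not actually prove this lemma: it is stated as the classical isoperimetric inequality and used without proof, as a standard fact. Your suggested shortcut at the end---invoking the Jordan-curve isoperimetric inequality $4\pi A\le L^2$ together with the fact that equality holds only for circles---is precisely the spirit in which the paper treats the statement. The longer route through the convex hull and the Zenodorus/L'Huilier extremal property of regular $n$-gons is a valid self-contained alternative (and has the minor advantage of staying within polygonal geometry), but it is not needed here: for the purposes of the paper, a one-line citation suffices.
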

However, we will then exploit a different isoperimetric inequality that takes into account that many edges on the outer boundary of $G$ lie on the same triangular lattice.
A variant of the isoperimetric inequality in the plane, known as L'Huilier's inequality, states that among all polygons of a given perimeter for which the sides are constrained to be parallel to a given set of directions, the one of maximum area is circumscribed to a circle \cite{FT}*{pp.~9--10}, \cite{FTFTK}*{pp.~12--13}.
In particular, among all closed polygons of a given perimeter with each side parallel to one of the sides of a fixed regular hexagon, the regular hexagon is optimal.
We prove the following variant of this special case where we allow a certain bounded amount of the perimeter to be unconstrained in direction.

\begin{lemma}\label{isoperimetric}
Let $P$ be a simple polygon with perimeter $b$, area $A$, and with the total length of the sides not parallel to any side of some fixed regular hexagon at most $b_*$.
Then \[8\sqrt{3}A \leq \bigl(b+ \bigl(\tfrac{2}{\sqrt{3}}-1\bigr)b_*\bigr)^2.\]
\end{lemma}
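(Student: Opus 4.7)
The plan is to reduce to the case where $P$ has only hex-direction edges and then apply L'Huilier's inequality in the form $8\sqrt{3}A\leq b^2$. I carry out this reduction by replacing each bad edge of $P$ by a two-segment bent path whose segments lie along the two hex directions within $\pi/3$ of the bad edge, with the bend chosen on the exterior side of $P$ so that the small triangle formed by the original bad edge and its bent replacement lies outside $P$.

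The quantitative heart of the argument is the claim that the bent path replacing a bad edge of length $\ell$ has total length at most $\frac{2\ell}{\sqrt{3}}$. This is a short calculation: if the bad edge makes angle $\theta$ with one of its two neighboring hex directions (so $\theta\in(0,\pi/3)$), decomposing it into components along these two directions gives summand lengths $\alpha=\ell(\cos\theta-\sin\theta/\sqrt{3})$ and $\beta=2\ell\sin\theta/\sqrt{3}$, both positive, with $\alpha+\beta=\ell(\cos\theta+\sin\theta/\sqrt{3})$. Elementary calculus shows this is maximized at $\theta=\pi/6$ with value $\frac{2\ell}{\sqrt{3}}$. Summing over all bad edges, the perimeter increase under the replacement is at most $(\frac{2}{\sqrt{3}}-1)b_*$.

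After all replacements we obtain a closed polygonal curve $\gamma$ with all segments in hex directions and total length at most $b+(\frac{2}{\sqrt{3}}-1)b_*$. If $\gamma$ is simple it bounds a simple polygon $P'$ containing $P$ (since we bent outward), and applying L'Huilier to $P'$ finishes the proof. In general, bent paths from different bad edges can overlap, so $\gamma$ may self-intersect. In that case I would define $P'$ as the complement of the unbounded component of $\bR^2\setminus\gamma$; its outer boundary is a simple polygon whose edges are all in hex directions, and one argues that $P'\supseteq P$ (so $A(P')\geq A(P)$) and that the perimeter of $P'$ is at most the length of $\gamma$, whence L'Huilier yields the desired bound.

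The main obstacle is this topological step: when $\gamma$ self-intersects, one must rigorously verify that the outer boundary is a simple hex-direction polygon, that it contains $P$, and that the perimeter bound survives, particularly when bent paths overlap with each other or with original edges of $P$. A secondary obstacle is that L'Huilier's classical form is stated for convex polygons, so one needs to apply it to the possibly non-convex polygon $\partial P'$. I would handle this via a convex-hull reduction, combined either with iterative hex-replacement of any non-hex shortcut edges the hull may introduce, or with a direct induction on the number of reflex vertices of $P'$ in which each reflex vertex is flattened at the cost of only a controlled increase in bad length.
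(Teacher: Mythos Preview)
Your quantitative core --- that a segment of length $\ell$ can be replaced by a two-segment hex-direction path of total length at most $\tfrac{2}{\sqrt{3}}\ell$ --- is exactly the paper's key estimate (the paper phrases it via the cosine rule in a $120^\circ$ triangle: if $\angle r=120^\circ$ then $|pr|+|qr|\le\tfrac{2}{\sqrt{3}}|pq|$). But your reduction strategy differs from the paper's, and the two obstacles you flag are genuine gaps that you have not closed.

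The paper sidesteps both obstacles with a different reduction. Rather than locally bending bad edges outward, it first \emph{rearranges} the edges of $P$: among all simple polygons whose edge multiset (lengths and directions) agrees with that of $P$, take one $P'$ of maximum area. A one-line flip argument shows $P'$ must be convex, and by construction $b(P')=b$, $b_*(P')=b_*$, $A(P')\ge A$. Now circumscribe $P'$ by a hexagon $H$ with sides parallel to the fixed hexagon. The six corners of $H$ outside $P'$ are $120^\circ$ triangles whose hypotenuses are the non-hex arcs of $P'$; applying the $\tfrac{2}{\sqrt{3}}$ bound to each corner gives $b(H)\le b+(\tfrac{2}{\sqrt{3}}-1)b_*$. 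Then $A\le A(P')\le A(H)\le\tfrac{\sqrt{3}}{24}\,b(H)^2$, the last step being the hexagonal isoperimetric inequality applied to the genuinely convex hexagon $H$.

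The rearrangement step is what buys the paper a clean proof: it convexifies without altering the edge data, so there are no self-intersections to track and no non-convex L'Huilier to justify. Your outward-bending curve $\gamma$ really can self-intersect (for non-convex $P$, a bent triangle can cross a distant boundary edge), and your ``complement of the unbounded component'' repair, while plausible, still owes arguments that the outer boundary has length at most that of $\gamma$ and that $P$ lies inside it when winding numbers exceed~$1$. Your proposed fix for the second obstacle --- convex hull followed by iterative hex-replacement of any new non-hex edges --- is not obviously convergent: the hull can introduce non-hex shortcuts, replacing them grows the perimeter, and you have no bound on the iteration. In fact the paper's rearrangement trick would resolve this second obstacle immediately (a non-convex hex-direction polygon rearranges to a convex one with the same perimeter and larger area); but once you have that trick, the circumscribed-hexagon route is shorter than local bending.
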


Our proof of Theorem~\ref{thm1} has an analytical flavour, with many inequalities appearing in it that are rather weak, with constants that could easily be improved.
Surprisingly, this turns out to be unnecessary.
The inequalities also often need $n$ to be large, which creates the potential problem that this approach will only work for sufficiently large $n$.
However, the original isoperimetric inequality allows us to dismiss all values of $n < 147$ early on in the proof.
Then the rest of the proof goes through, although there are many inequalities that have to be checked.
This can be done by hand or by apps such as Wolfram Alpha or Desmos.
We have to balance the asymptotics with constant terms in order to make the inequalities valid for small $n$.
It is rather strange that such an approach manages to make the induction succeed for all values of $n$.

\subsection{Proof outline}
The proof of Theorem~\ref{thm1} is in Section~\ref{sec:proof}.
Here we give a high-level summary.
The proof is by induction with the induction step split up into $12$~claims.
As the cases $1\leq n\leq 4$ of Theorem~\ref{thm1} are easily checked, we assume that $n > 4$ and that the theorem holds for all smaller values of $n$, but not for $n$, and aim for a contradiction.

Starting off, we show that the graph has minimum degree $3$ (Claim~\ref{mindegree}) and is $2$-connected (Claim~\ref{2-connected}).
We then apply the Euler formula and count incident edge-face pairs to give an upper bound for the length of the boundary of $G$ in terms of $n$ and a certain weighted count $F$ of the number of non-triangular inner faces (Claim~\ref{b-bound}), as well as a lower bound for the number of triangles (Claim~\ref{f3bound}).
If this weighted count $F$ equals $0$ or $1$, then either all inner faces are triangles, a case where Theorem~\ref{thm0} applies, or there is a quadrilateral as well, a case that can be easily dealt with by splitting up the graph at the quadrilateral and applying induction.
Thus we can assume that $F\geq 2$ (Claim~\ref{Fgeq2}).

We apply the isoperimetric inequality (Lemma~\ref{isoperimetric0}) to our upper bound of the boundary length of $G$ and the lower bound on its area derived from the lower bound on the number of triangles.
This gives an upper bound for $F$ of the form $c\sqrt{12n-3}$ (we use $c=1/11$, but $c$ can be made as small as $1/20$) as well as showing that we may assume that $n\geq 147$ otherwise $F < 2$ (Claim~\ref{Fbound1}).

We next consider the so-called lattice components of the graph, which we define to be maximal $2$-connected subgraphs that each lie on some triangular lattice.
We find lower bounds for their boundary lengths (Claim~\ref{lattice-boundary}) and show that they cover almost all of $G$ without too much overlap (Claim~\ref{lattice-components}).
This enables us to show that the largest lattice component, $G_1$, has to be quite large (we show that $n(G_1) > 3n/4$ in Claim~\ref{n1bound}, but it is possible to go up to $0.97n$ if $n$ is sufficiently large).

We split $G$ up into $G_1$ and a slight enlargement of $G-G_1$, and apply induction to obtain a lower bound for $F$ in terms of $n$ and $n_1$ (Claim~\ref{eq1}) that will be crucial for the final part of the proof.
We also bound the number of edges on the boundary of $G$ that are not on the boundary of $G_1$ (Claim~\ref{b*upperbound}).
Thus, except for a bounded quantity, the boundary edges of $G$ all lie on the same triangular lattice.
Our new isoperimetric inequality (Lemma~\ref{isoperimetric}) then gives an improved upper bound on the area of $G$, which can be bounded from below using the bound on the number of triangles from Claim~\ref{f3bound}.
The upshot of this is that we find a very good upper bound on $F$ (Claim~\ref{Fbound2}), which, together with the lower bound from Claim~\ref{eq1} gives us our final contradiction.

\subsection{Definitions}
We define a \emph{triangular lattice} to be any subset of the plane isometric to $\{m(1,0)+n(1/2,\sqrt{3}/2) : m,n\in\bZ\}$, and we say that a finite set of points \emph{lies on a triangular lattice} if it is a subset of some triangular lattice.
We will use the following simple observation.
\begin{lemma}\label{latticelemma}
If $a$ and $b$ are distinct points on a triangular lattice and $c$ is a point at distance $1$ to both $a$ and $b$, then $c$ lies on the same triangular lattice.
\end{lemma}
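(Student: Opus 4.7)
The plan is to reduce the statement to a completely explicit finite case analysis. After translating and rotating, I assume the lattice in question is $\Lambda = \{m(1,0)+n(1/2,\sqrt{3}/2) : m,n\in\bZ\}$ with $a=0$ and $b\in\Lambda\setminus\{0\}$. Since $c$ is at distance $1$ from both $a$ and $b$, the triangle inequality forces $|b|\leq 2$.

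Next I would observe that the squared distance between two points of $\Lambda$ is a value of the integer quadratic form $m^2+mn+n^2$, whose only positive values at most $4$ are $1$, $3$, and $4$. Hence $|b|^2 \in \{1,3,4\}$. For each of the three subcases I would use the dihedral symmetry of $\Lambda$ (generated by $60^\circ$ rotations about $0$ and reflection across a lattice line) to fix a canonical representative for $b$, then solve $|c|^2 = |c-b|^2 = 1$ explicitly. Concretely: for $|b|=1$ take $b=(1,0)$, giving $c\in\{(1/2,\pm\sqrt{3}/2)\}\subset\Lambda$; for $|b|=\sqrt{3}$ take $b=(3/2,\sqrt{3}/2)$, giving $c\in\{(1,0),(1/2,\sqrt{3}/2)\}\subset\Lambda$; for $|b|=2$ we must have $b=2e$ for some unit lattice vector $e$, so that $c=e\in\Lambda$ is the unique midpoint.

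I do not anticipate any genuine obstacle: the only mildly substantive ingredient is the classification of short vectors in the triangular lattice, and the remainder is elementary arithmetic. Geometrically, the lemma just says that the equilateral triangle built on any unit edge of $\Lambda$, or on any ``long diagonal'' of length $\sqrt{3}$, has its third vertex in $\Lambda$, which is immediate from a picture.
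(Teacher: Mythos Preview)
The paper does not give a proof of this lemma at all; it is stated as a ``simple observation'' and left without justification. Your argument is correct and supplies exactly the kind of routine verification the authors omit: the reduction to $a=0$ via an isometry, the bound $|b|\leq 2$ from the triangle inequality, the identification of the short nonzero values $1,3,4$ of the form $m^2+mn+n^2$, and the explicit solution of the two-circle intersection in each of the three cases (using the $12$-fold dihedral symmetry of $\Lambda$ to fix a representative $b$). Each step checks out, including the claim that every lattice vector of squared length $4$ is twice a unit lattice vector.
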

We use standard graph theory terminology as can be found in \cite{Diestel}*{Sections 1.1--1.4, 4.1, 4.2}.
We call the cycle bounding the outer face of a plane graph $G$ the \emph{boundary} of $G$, its length the \emph{boundary length} of $G$, and its edges the \emph{boundary edges} of $G$.

We will occasionally use the function $\phi(x)=\sqrt{12x-3}-3$ which is non-negative for all $x\geq 1$.
Since $\phi$ is strictly concave, $\phi(x+a)-\phi(x)$ is strictly decreasing in $x$ for all fixed $a>0$.
This implies the following inequality that we will use repeatedly.
\begin{lemma}\label{concave}
$\phi(a)+\phi(b) < \phi(a-c) + \phi(b+c)$ whenever $a> b+c$, $b\geq 1$ and $c > 0$.
\end{lemma}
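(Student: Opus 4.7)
The plan is to rearrange the target inequality into a statement about a single increment function and then invoke strict concavity of $\phi$. Subtracting $\phi(b)+\phi(a-c)$ from both sides, the desired inequality $\phi(a)+\phi(b) < \phi(a-c)+\phi(b+c)$ is equivalent to
\[\phi(b+c)-\phi(b) > \phi(a)-\phi(a-c),\]
which is precisely $g(b) > g(a-c)$ for the increment function $g(x) := \phi(x+c)-\phi(x)$. The hypothesis $a > b+c$ yields $a-c > b$, so it will suffice to show that $g$ is strictly decreasing on $[1,\infty)$.

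To verify this, I would compute $\phi'(x) = 6/\sqrt{12x-3}$, which is strictly decreasing for $x > 1/4$, so $\phi$ is strictly concave there. Writing
\[g(x) = \int_{x}^{x+c} \phi'(t)\, dt\]
expresses $g(x)$ as the integral of $\phi'$ over a window of fixed length $c$ that slides to the right; strict monotonicity of $\phi'$ then gives $g(x_1) > g(x_2)$ whenever $1/4 < x_1 < x_2$. Applying this at $x_1 = b$ and $x_2 = a-c$, both of which are at least $1 > 1/4$ by hypothesis, closes the argument.

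There is essentially no obstacle here: the sentence immediately preceding the statement already records the main point informally. If one wanted to avoid calculus, one could use the equivalent characterization of strict concavity via strictly decreasing secant slopes, applied to the four points $b < b+c$ and $a-c < a$ (noting that $a-c > b$), which yields the same comparison of increments.
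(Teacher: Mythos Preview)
Your proposal is correct and is essentially the same as the paper's treatment: the paper records, immediately before the lemma, that strict concavity of $\phi$ makes $\phi(x+a)-\phi(x)$ strictly decreasing in $x$, and leaves the lemma as a direct consequence. You simply spell this out, rewriting the inequality as $g(b)>g(a-c)$ for $g(x)=\phi(x+c)-\phi(x)$, using $a-c>b\geq 1$, and verifying the monotonicity either via $\phi'(x)=6/\sqrt{12x-3}$ or via secant slopes; nothing further is needed.
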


\section{Proof}\label{sec:proof}
\subsection{Setting up the induction}
Let $G$ be a matchstick graph on $n=n(G)$ vertices in the plane.
Denote the number of edges by $e=e(G)$.
We prove Theorem~\ref{thm1} 
by induction.
The cases $1\leq n\leq 3$ are trivial. The case $n=4$ follows since the complete graph on $4$ vertices is not a matchstick graph.
So we fix $n > 4$ and assume the theorem is true for all matchstick graphs of less than $n$ vertices.
Among all matchstick graphs on $n$ vertices, fix one $G$ with the maximum number $e$ of edges.
We assume that
\begin{equation}\label{assumption1}
e > 3n-\sqrt{12n-3},
\end{equation}
and will aim for a contradiction.
Claims~\ref{mindegree}--\ref{Fbound2} below will all have as unstated assumptions the induction hypothesis, $n > 4$, the inequality \eqref{assumption1}, and that $G$ has the maximum number of edges among all matchstick graphs with $n$ vertices.
\subsection{Basic properties}
\begin{claim}\label{mindegree}
Each vertex of $G$ has at least $3$ neighbours.
\end{claim}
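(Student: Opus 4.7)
The plan is to delete the low-degree vertex and invoke the inductive hypothesis. Suppose for contradiction that $G$ has some vertex $v$ with $\deg v \leq 2$. Then $G' = G - v$ is itself a matchstick graph, on $n - 1$ vertices and with at least $e - 2$ edges. Since $n > 4$, we have $n - 1 \geq 4$, so the inductive hypothesis applies to $G'$ and yields
\[
e - 2 \;\leq\; e(G') \;\leq\; 3(n-1) - \sqrt{12(n-1)-3} \;=\; 3n - 3 - \sqrt{12n - 15},
\]
that is, $e \leq 3n - 1 - \sqrt{12n - 15}$.

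It then remains to show that this upper bound does not exceed $3n - \sqrt{12n - 3}$, which will directly contradict the standing assumption~\eqref{assumption1}. The required inequality is $\sqrt{12n - 3} \leq 1 + \sqrt{12n - 15}$; squaring (legitimate since both sides are positive for $n \geq 2$) reduces this to $11 \leq 2\sqrt{12n - 15}$, equivalently $n \geq 181/48 \approx 3.77$. This holds comfortably for all $n \geq 4$, and in particular throughout the inductive range $n > 4$.

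I do not anticipate any real obstacle. The argument uses neither the extremality of $G$ among $n$-vertex matchstick graphs nor any structural property of matchstick graphs beyond the trivial observation that deleting a vertex and its incident edges leaves a matchstick graph. What is worth noting is that the numerical inequality $\sqrt{12n-3} \leq 1 + \sqrt{12n-15}$ first becomes valid precisely at $n = 4$; the induction step would genuinely fail at $n = 3$, and this is exactly why the small cases $1 \leq n \leq 4$ of Theorem~\ref{thm1} had to be dispensed with by hand before the induction was set up.
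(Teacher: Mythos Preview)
Your proof is correct and follows the same strategy as the paper's: delete a vertex of degree at most~$2$, apply the induction hypothesis to the remaining graph, and verify that the resulting bound still contradicts~\eqref{assumption1}. The only cosmetic difference is that the paper packages the final numerical check via the concavity Lemma~\ref{concave} (bounding $\phi(n)-\phi(n-1)\leq\phi(5)-\phi(4)<1$), whereas you square directly; both routes establish the same inequality $\sqrt{12n-3}\leq 1+\sqrt{12n-15}$ for $n\geq 5$.
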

\begin{proof}
Suppose that there is a vertex $v$ with at most $2$ neighbours, and let $G'=G-v$.
Then by induction and Lemma~\ref{concave},
\begin{align*}
    e&\leq e(G')+2\leq 3(n-1)-\sqrt{12(n-1)-3}+2\\
    &= 3n-\phi(n-1)-4\\
    &\leq 3n-\phi(n)+\phi(5)-\phi(4)-4\\
    &= 3n-\sqrt{12n-3} + \phi(5)-\phi(4)-1 < 3n-\sqrt{12n-3},
\end{align*}
which contradicts \eqref{assumption1}.
\end{proof}

\begin{claim}\label{2-connected}
$G$ is 2-connected.
\end{claim}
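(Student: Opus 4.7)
The plan is to argue by contradiction: if $G$ is not $2$-connected, then either $G$ is disconnected or $G$ has a cut vertex, and in either case I would split $G$ into matchstick subgraphs on fewer than $n$ vertices, apply the induction hypothesis to each, and aggregate to a bound that contradicts \eqref{assumption1}. The spirit is the same as in the proof of Claim~\ref{mindegree}: trade connectivity for an application of induction.

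First, suppose $G$ is disconnected. Let $H_1$ be one connected component and $H_2$ the union of the rest. Both are matchstick graphs with $n_1+n_2=n$ and $e(H_1)+e(H_2)=e$, and Claim~\ref{mindegree} implies every component of $G$ has at least $4$ vertices, so $n_1,n_2\geq 4$. Applying induction and summing yields
\[e \leq 3n - \sqrt{12n_1 - 3} - \sqrt{12n_2 - 3}.\]
A single squaring, using $(12n_1-3)(12n_2-3)\geq 45^2 \gg 9/4$, gives $\sqrt{12n_1-3}+\sqrt{12n_2-3} > \sqrt{12n-3}$, hence $e < 3n-\sqrt{12n-3}$, contradicting \eqref{assumption1}.

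Now suppose $G$ is connected but has a cut vertex $v$. Let $A_1$ be one component of $G-v$ and $A_2 = V(G)\setminus(A_1\cup\{v\})$, and let $G_i$ be the subgraph of $G$ induced on $A_i\cup\{v\}$. Each $G_i$ is a matchstick graph with $n_i = n(G_i) < n$, $n_1+n_2 = n+1$, and $e = e(G_1) + e(G_2)$ (each edge not incident to $v$ lies in exactly one side, while each edge at $v$ is assigned to whichever $G_i$ contains its other endpoint). Because Claim~\ref{mindegree} forces every component of $G-v$ to have at least $3$ vertices, $n_1,n_2\geq 4$. Induction then gives
\[e \leq 3(n+1) - \sqrt{12n_1-3} - \sqrt{12n_2-3}.\]
Applying Lemma~\ref{concave} with $a=n$, $b=1$, and $c=n_2-1$ (so $a-c=n_1$, $b+c=n_2$, and the hypotheses $a>b+c$, $b\geq 1$, $c>0$ are verified from $n_1,n_2\geq 4$) yields $\phi(n_1)+\phi(n_2) > \phi(n)$, equivalently $\sqrt{12n_1-3}+\sqrt{12n_2-3} > \sqrt{12n-3}+3$. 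Combining, $e < 3n-\sqrt{12n-3}$, again contradicting \eqref{assumption1}.

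The main obstacle is really just the setup: one must verify that every component of $G-v$ contains at least $3$ vertices (immediate from Claim~\ref{mindegree}) so that $n_i\geq 4$, and one must note that each $G_i$ inherits the matchstick property automatically as an induced subgraph. Once these are in place, the inequality is a one-line application of Lemma~\ref{concave} or of direct squaring.
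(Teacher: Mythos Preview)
Your proof is correct and, for the cut-vertex case, essentially identical to the paper's: split $G$ at the cut vertex into $G_1$ and $G_2$, apply the induction hypothesis to each, and use Lemma~\ref{concave} (with $\phi(1)=0$) to combine the two square-root terms into $\sqrt{12n-3}+3$.

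The one genuine difference is in how you rule out disconnectedness. The paper uses the standing maximality assumption on $e$: if $G$ were disconnected, translate one component until a new unit-distance edge appears, contradicting edge-maximality. You instead run the same induction argument as in the cut-vertex case, invoking Claim~\ref{mindegree} to guarantee $n_1,n_2\geq 4$ and then checking $\sqrt{12n_1-3}+\sqrt{12n_2-3}>\sqrt{12n-3}$ directly. Both are valid; the paper's route is a one-line geometric observation that exploits maximality, while yours is self-contained analytically but leans on Claim~\ref{mindegree} (which the paper's argument does not need here). Note also that the paper only requires $n_1,n_2\geq 2$ in the cut-vertex case, so your appeal to Claim~\ref{mindegree} for the bound $n_i\geq 4$ there is harmless overkill.
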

\begin{proof}
$G$ has to be connected, otherwise we can move a connected component until one of its vertices has distance $1$ to some other vertex, contradicting maximality.

If $G$ is not $2$-connected, then $G$ has a cut vertex.
Thus there exist two subgraphs $G_1$ and $G_2$ that cover $G$ and have only a single vertex in common.
Then with $n_1=n(G_1), n_2=n(G_2)\geq 2$, $e_1=e(G_1)$, and $e_2=e(G_2)$, we have $n = n_1+n_2-1$, and 
\begin{align*}
e &= e_1+e_2\\
&\leq 3n_1-\sqrt{12n_1-3}+3n_2-\sqrt{12n_2-3}\\
&= 3n-\phi(n_1)-\phi(n_2)-3\\
&< 3n-\phi(n_1+n_2-1)-\phi(1)-3 = 3n-\sqrt{12n-3}
\end{align*}
by induction and Lemma~\ref{concave}, which contradicts \eqref{assumption1}.
\end{proof}

\subsection{The Euler formula and double counting.}
Since $G$ is 2-connected by Claim~\ref{2-connected}, all of its faces are polygons.
Let $b$ denote the length of the outer face, and for each $i\geq 3$, let $f_i$ denote the number of inner faces bounded by a cycle of length $i$.
Since $G$ is connected, Euler's formula gives
\begin{equation}\label{Euler}
n-e+\sum_{i\geq 3}f_i =1,
\end{equation}
and since $G$ is $2$-connected, by counting incident edge-face pairs in two ways, we obtain
\begin{equation}\label{double-counting}
2e=b+\sum_{i\geq 3}if_i.
\end{equation}
Let $F=\sum_{i\geq 4}(i-3)f_i$.
Then \eqref{Euler} and \eqref{double-counting} imply
\begin{equation}\label{basic-identity}
e=3n-3-b-F.
\end{equation}
From \eqref{assumption1} we then obtain
\begin{claim}\label{b-bound}
$b < \sqrt{12n-3} - 3 - F =\phi(n)-F$.
\end{claim}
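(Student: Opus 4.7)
The plan is almost immediate: Claim~\ref{b-bound} is really just a rearrangement of the basic identity \eqref{basic-identity} combined with the standing assumption \eqref{assumption1}. I would proceed in a single short paragraph.

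First, I would invoke \eqref{basic-identity}, which gives $e = 3n - 3 - b - F$, and then plug this into the inequality \eqref{assumption1}, $e > 3n - \sqrt{12n-3}$. Cancelling the $3n$ terms on both sides yields
\[
-3 - b - F > -\sqrt{12n-3},
\]
which, after rearranging, becomes
\[
b + F < \sqrt{12n-3} - 3.
\]
Since $F \geq 0$ by its definition $F = \sum_{i\geq 4}(i-3)f_i$, we obtain $b < \sqrt{12n-3} - 3 - F$. Recalling the definition $\phi(n) = \sqrt{12n-3} - 3$, this is exactly the desired inequality $b < \phi(n) - F$.

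There is no real obstacle here: the entire content of the claim is packaged into \eqref{basic-identity} (which itself followed from Euler's formula \eqref{Euler} and the edge-face double count \eqref{double-counting}), so the only work is an algebraic rearrangement and appealing to \eqref{assumption1}. The claim serves as a convenient restatement that will be used in later steps of the induction.
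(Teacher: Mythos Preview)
Your proof is correct and matches the paper's approach exactly: the claim is stated in the paper as an immediate consequence of \eqref{basic-identity} and \eqref{assumption1}, and that is precisely what you do. One minor remark: the appeal to $F\geq 0$ is unnecessary, since from $b+F<\sqrt{12n-3}-3$ you get $b<\sqrt{12n-3}-3-F$ simply by subtracting $F$ from both sides, regardless of its sign.
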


\subsection{Using the isoperimetric inequality}
We need the following lower bound on the number of triangular faces, so that we can estimate the area of the region bounded by $G$, then use the isoperimetric inequality to find a lower bound for $b$ and from that our first upper bound on $F$.
This upper bound will be needed later in our proof of an improved upper bound for $F$ (Claim~\ref{Fbound2}).
\begin{claim}\label{f3bound}
$f_3 > 2n+1-\sqrt{12n-3}-F=\phi(n)^2/6-F$.
\end{claim}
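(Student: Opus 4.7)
The plan is to combine Euler's formula with a trivial comparison of two sums over non-triangular faces, then apply the standing assumption \eqref{assumption1}.

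First I would observe that Euler's formula \eqref{Euler} gives
\[\sum_{i\geq 3} f_i = 1 + e - n.\]
Next I would exploit the fact that for $i\geq 4$ we have $i-3\geq 1$, so term by term,
\[\sum_{i\geq 4} f_i \leq \sum_{i\geq 4}(i-3)f_i = F.\]
Subtracting, I get the key inequality
\[f_3 = \sum_{i\geq 3}f_i - \sum_{i\geq 4}f_i \geq (1+e-n) - F.\]

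Now I would plug in the assumption \eqref{assumption1}, namely $e > 3n - \sqrt{12n-3}$, to obtain
\[f_3 > 1 + (3n - \sqrt{12n-3}) - n - F = 2n + 1 - \sqrt{12n-3} - F,\]
which is the first expression in the claim.

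Finally I would verify the cosmetic identity $\phi(n)^2/6 = 2n+1-\sqrt{12n-3}$ by a direct expansion: since $\phi(n) = \sqrt{12n-3}-3$,
\[\phi(n)^2 = (12n-3) - 6\sqrt{12n-3} + 9 = 12n + 6 - 6\sqrt{12n-3},\]
and dividing by $6$ gives exactly $2n+1-\sqrt{12n-3}$. There is no real obstacle in this claim; the only thing to be careful about is not to overcount, which is handled by the crude bound $\sum_{i\geq 4}f_i \leq F$.
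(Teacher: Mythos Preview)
Your proof is correct and follows exactly the paper's approach: use Euler's formula \eqref{Euler} to write $f_3 = e-n+1-\sum_{i\geq 4} f_i$, bound $\sum_{i\geq 4} f_i \leq F$, and apply assumption~\eqref{assumption1}. You simply add the explicit verification of the identity $\phi(n)^2/6 = 2n+1-\sqrt{12n-3}$, which the paper leaves to the reader.
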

\begin{proof}
Using the Euler formula~\eqref{Euler} and our assumption~\eqref{assumption1}, we obtain
\begin{equation*}
    f_3 = e-n+1-\sum_{i\geq 4} f_i 
    > 2n-\sqrt{12n-3}+1-F.\qedhere
\end{equation*}

\end{proof}

\begin{claim}\label{Fgeq2}
$F\geq 2$.
\end{claim}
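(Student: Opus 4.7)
The plan is to argue by contradiction, treating the cases $F=0$ and $F=1$ separately.

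For $F=0$ every inner face of $G$ is an equilateral triangle of unit side. Since $G$ is $2$-connected by Claim~\ref{2-connected}, its inner dual is connected, so repeatedly crossing shared edges of adjacent triangular faces and applying Lemma~\ref{latticelemma} shows that all vertices of $G$ lie on a common triangular lattice. In particular any two distinct vertices of $G$ are at distance at least $1$, and Theorem~\ref{thm0} gives $e\leq 3n-\sqrt{12n-3}$, contradicting \eqref{assumption1}.

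For $F=1$ the identity $F=\sum_{i\geq 4}(i-3)f_i=1$ forces $f_4=1$ and $f_i=0$ for $i\geq 5$, so there is a unique non-triangular inner face $Q$: a rhombus with unit sides, vertices $v_1,v_2,v_3,v_4$ in cyclic order, and interior angles $\theta_1,\theta_2,\theta_1,\theta_2$ with $\theta_1+\theta_2=180^\circ$. If some vertex $v_j$ of $Q$ is interior in $G$, then the inner face angles at $v_j$ sum to $360^\circ$ and consist of the angle $\theta_j$ from $Q$ and $60^\circ$ from each incident triangle, forcing $\theta_j=360^\circ-60^\circ k_j\in(0^\circ,180^\circ)$ and hence $\theta_j\in\{60^\circ,120^\circ\}$. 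Then every angle of $Q$ lies in $\{60^\circ,120^\circ\}$, so one diagonal of $Q$ has length $2\sin 30^\circ=1$; adding this diagonal inside the face $Q$ (which introduces no crossings) yields a matchstick graph on $n$ vertices with $e+1$ edges, contradicting the maximality of $G$.

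The main obstacle is the remaining subcase in which every vertex of $Q$ lies on the outer boundary of $G$. A local angle argument together with Claim~\ref{mindegree} first rules out the configuration in which two $Q$-edges incident to a common vertex are both boundary edges (otherwise any additional edge at that vertex would be forced either inside the face $Q$ or inside the outer face, contradicting $\deg(v_j)\geq 3$). Hence the number $b_Q$ of boundary edges of $Q$ satisfies $b_Q\in\{0,1,2\}$, and when $b_Q=2$ the two boundary edges are opposite in $Q$. In each admissible case the interior edges of $Q$ split $G$ into $4-b_Q$ matchstick subgraphs $G_1,\dots,G_{4-b_Q}$, each containing two consecutive vertices of $Q$ and sharing only these $Q$-vertices with its neighbours, satisfying
\[
\sum_i n(G_i)=n+s_{b_Q},\qquad \sum_i e(G_i)=e-b_Q,
\]
where $s_{b_Q}=4-2b_Q\in\{0,2,4\}$ records the over-counting of shared $Q$-vertices. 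Applying the induction hypothesis to each $G_i$ and verifying the resulting arithmetic inequality $\sum_i\sqrt{12 n(G_i)-3}\geq 3 s_{b_Q}+b_Q+\sqrt{12n-3}$ — which holds throughout the range of $n$ compatible with \eqref{assumption1}, since the boundary $b$ is forced to be long when $b_Q$ is small — then contradicts \eqref{assumption1}.
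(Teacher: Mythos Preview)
Your argument is correct in outline. The treatment of $F=0$ and of the sub-case where some vertex of $Q$ is interior to $G$ matches the paper's proof.

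For the remaining sub-case (all four $Q$-vertices on the outer boundary) you take a more elaborate route than the paper. The paper simply picks \emph{one} interior edge $pq$ of $Q$, cuts $G$ there into two overlapping pieces $G_1,G_2$ sharing the edge $pq$ (so $n_1+n_2=n+2$ and $e_1+e_2=e+1$), uses Claim~\ref{b-bound} together with $b\geq 5$ to force $n>7$, and finishes with two applications of Lemma~\ref{concave}. You instead first establish the stronger structural fact $b_Q\leq 2$ (with equality only for opposite edges), then cut along \emph{all} $4-b_Q$ interior $Q$-edges simultaneously to obtain $4-b_Q$ pieces, and appeal to the inequality
\[
\sum_i\sqrt{12\,n(G_i)-3}\;\geq\;3s_{b_Q}+b_Q+\sqrt{12n-3}
\]
separately for $b_Q\in\{0,1,2\}$. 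These inequalities do hold --- by concavity the left side is minimised, subject to $\sum_i n(G_i)=n+s_{b_Q}$ and $n(G_i)\geq 3$, when all but one of the $n(G_i)$ equal $3$, and a direct check then covers the relevant range of $n$ --- so your strategy is sound. But you have left this verification as a bare assertion, and filling it in amounts to three separate instances of the same concavity bookkeeping that the paper performs once. The single-cut decomposition is considerably lighter and makes your observation $b_Q\leq 2$ unnecessary (though it is a pleasant fact in its own right).
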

\begin{proof}
If $F=0$, then all inner faces are triangular, and since $G$ is $2$-connected, it lies on a single triangular lattice.
Then $G$ is a penny graph, and Harborth's Theorem~\ref{thm0} gives $e\leq 3n-\sqrt{12n-3}$, a contradiction.

If $F=1$, then there is a single quadrilateral inner face with all other inner faces triangular.
Each vertex $v$ of this quadrilateral must lie on the boundary of $G$, otherwise, all other faces incident with $v$ would be equilateral triangles, implying that the angle of the quadrilateral at $v$ is a multiple of $60^\circ$.
Then the quadrilateral would have angles $60^\circ$ and $120^\circ$, and we could add an edge between the two opposite vertices at the $120^\circ$ angles, violating the maximality of $G$.

We next decompose $G$.
Since $n>5$, at least one of the edges of the quadrilateral is not a boundary edge of $G$.
Let $p$ and $q$ be the endpoints of such an edge.
Then $G - \{p,q\}$ is not connected and has exactly two connected components $C_1$ and $C_2$.
Let $G_i$ be the subgraph of $G$ induced by the vertices of $C_i$ and $p$ and $q$ for $i=1,2$.
Then $G_1$ and $G_2$ have only an edge of the quadrilateral in common, and cover $G$.
Denoting $n_i=n(G_i)$ and $e_i=e(G_i)$, $i=1,2$, we have $n_1+n_2=n+2$ and $e_1+e_2=e+1$.
Since $pq$ is not on the boundary of $G$, it follows that $n_i\geq 3$ ($i=1,2$), and the length $b$ of the boundary of $G$ is at least $5$.
Then Claim~\ref{b-bound} implies that $n >7$, and
\begin{align*}
    e &= e_1+e_2-1\\
    &\leq 3n-\phi(n_1)-\phi(n_2)-1 \quad\text{by induction}\\
    &\leq 3n-\phi(n_1+n_2-3)-\phi(3)-1 \quad\text{by Lemma~\ref{concave}}\\
    &< 3n-\phi(n)+\phi(7)-\phi(6)-\phi(3)-1 \quad\text{again by Lemma~\ref{concave}}\\
    &< 3n-\sqrt{12n-3},
\end{align*}
contradicting assumption~\ref{assumption1}.
\begin{figure}
    \centering
\definecolor{ffqqqq}{rgb}{1,0,0}
\definecolor{qqqqct}{rgb}{0,0,0.7647058823529411}
\definecolor{bvqqqq}{rgb}{0.7098039215686275,0,0}
\definecolor{qqqqff}{rgb}{0,0,1}
\definecolor{uuuuuu}{rgb}{0.26666666666666666,0.26666666666666666,0.26666666666666666}
\definecolor{qqwuqq}{rgb}{0,0.39215686274509803,0}
\begin{tikzpicture}[line cap=round,line join=round,>=triangle 45,scale=1.2]
\draw[line width=1pt, color=qqqqff] (-0.115834378161942,0.90319744372555) -- (0.8546048704318957,0.6618519138547284) -- (0.5783966060929246,1.6229497209018848) -- cycle;
\draw[line width=1pt, color=qqqqff] (0.5783966060929246,1.6229497209018848) -- (0.8546048704318957,0.6618519138547284) -- (1.5488358546867627,1.3816041910310628) -- cycle;
\draw[line width=1pt, color=qqqqff] (1.5488358546867627,1.3816041910310628) -- (0.8546048704318957,0.6618519138547284) -- (1.8250441190257332,0.4205063839839058) -- cycle;
\draw[line width=1pt, color=qqqqff] (0.8546048704318957,0.6618519138547284) -- (0.872577031426156,-0.3379865738167537) -- (1.7294764809339438,0.17749701800092027) -- cycle;
\draw[line width=1pt, color=qqqqff] (1.7294764809339438,0.17749701800092027) -- (0.872577031426156,-0.3379865738167537) -- (1.7474486419282036,-0.822341469670562) -- cycle;
\draw[line width=1pt, color=qqqqff] (1.7294764809339438,0.17749701800092027) -- (1.7474486419282036,-0.822341469670562) -- (2.6043480914359916,-0.3068578778528882) -- cycle;
\draw[line width=1pt, color=qqqqff] (-0.09786221716768187,-0.09664104394593212) -- (-0.115834378161942,0.90319744372555) -- (-0.9727338276697298,0.38771385190787616) -- cycle;
\draw[line width=1pt, color=ffqqqq] (-0.9727338276697298,0.38771385190787616) -- (-0.115834378161942,0.90319744372555) -- (-0.9907059886639896,1.3875523395793583) -- cycle;
\draw[line width=1pt, color=ffqqqq] (-0.09786221716768187,-0.09664104394593212) -- (-0.9727338276697298,0.38771385190787616) -- (-0.9547616666754701,-0.6121246357636062) -- cycle;
\draw[line width=1pt, color=ffqqqq] (-0.9727338276697298,0.38771385190787616) -- (-0.9907059886639896,1.3875523395793583) -- (-1.8476054381717777,0.8720687477616847) -- cycle;
\draw[line width=1pt, color=ffqqqq] (-0.9727338276697298,0.38771385190787616) -- (-1.8476054381717777,0.8720687477616847) -- (-1.8296332771775186,-0.12776973990979779) -- cycle;
\draw[line width=1pt, color=ffqqqq] (-0.9547616666754701,-0.6121246357636062) -- (-0.9727338276697298,0.38771385190787616) -- (-1.829633277177518,-0.12776973990979731) -- cycle;
\draw[line width=1pt, color=qqqqff] (0.872577031426156,-0.3379865738167537) -- (-0.09786221716768187,-0.09664104394593212) -- (0.17834604717128927,-1.0577388509930885) -- cycle;
\draw [line width=1pt, color=ffqqqq] (-0.123,-0.09664104394593212)-- (-0.143,0.90319744372555);
\draw [line width=1pt, color=qqqqff] (-0.078,-0.09664104394593212)-- (-0.098,0.90319744372555);
\draw [line width=1pt, color=qqqqff] (-0.09786221716768187,-0.09664104394593212)-- (0.872577031426156,-0.3379865738167537);
\draw [line width=1pt, color=qqqqff] (0.8546048704318957,0.6618519138547284)-- (0.872577031426156,-0.3379865738167537);
\draw [line width=1pt, color=qqqqff] (-0.115834378161942,0.90319744372555)-- (0.8546048704318957,0.6618519138547284);
\draw [fill=black] (-0.09786221716768187,-0.09664104394593212) circle (1pt);
\draw[color=black] (-0.17,-0.33) node {$q$};
\draw [fill=black] (-0.115834378161942,0.90319744372555) circle (1pt);
\draw[color=black] (-0.15,1.1) node {$p$};
\draw [fill=black] (0.872577031426156,-0.3379865738167537) circle (1pt);
\draw [fill=black] (0.8546048704318957,0.6618519138547284) circle (1pt);
\draw[color=ffqqqq] (-0.557366590219986,1.4) node {$G_1$};
\draw[color=qqqqff] (0.2,1.6) node {$G_2$};
\draw [fill=black] (0.5783966060929246,1.6229497209018848) circle (1pt);
\draw [fill=black] (1.5488358546867627,1.3816041910310628) circle (1pt);
\draw [fill=black] (1.8250441190257332,0.4205063839839058) circle (1pt);
\draw [fill=black] (1.7294764809339438,0.17749701800092027) circle (1pt);
\draw [fill=black] (1.7474486419282036,-0.822341469670562) circle (1pt);
\draw [fill=black] (2.6043480914359916,-0.3068578778528882) circle (1pt);
\draw [fill=black] (-0.9727338276697298,0.38771385190787616) circle (1pt);
\draw [fill=black] (-0.9907059886639896,1.3875523395793583) circle (1pt);
\draw [fill=black] (-0.9547616666754701,-0.6121246357636062) circle (1pt);
\draw [fill=black] (-1.8476054381717777,0.8720687477616847) circle (1pt);
\draw [fill=black] (-1.8296332771775186,-0.12776973990979779) circle (1pt);
\draw [fill=black] (-1.829633277177518,-0.12776973990979731) circle (1pt);
\draw [fill=black] (0.17834604717128927,-1.0577388509930885) circle (1pt);
\end{tikzpicture}
    \caption{Subgraphs $G_1$ and $G_2$ covering $G$ and sharing edge $pq$ of a quadrilateral}\label{Fgeq2fig}
\end{figure}
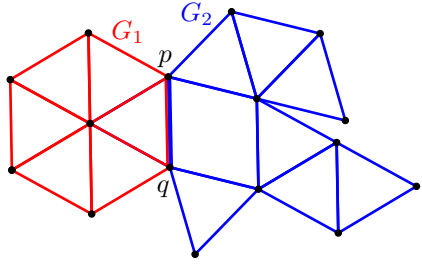
\end{proof}
\begin{claim}\label{Fbound1}
$n \geq 147$ and $F < \frac{1}{11}\sqrt{12n-3}-1$.
\end{claim}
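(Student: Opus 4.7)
The plan is to feed the area lower bound implied by Claim~\ref{f3bound} into the isoperimetric inequality (Lemma~\ref{isoperimetric0}), combine with Claim~\ref{b-bound} to obtain a quadratic inequality in $F$, and then read off both conclusions from the location of its roots.

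First I would bound the area $A$ enclosed by the outer boundary from below: each of the $f_3$ inner triangular faces is a unit equilateral triangle of area $\sqrt{3}/4$, so by Claim~\ref{f3bound} we have $A \geq \tfrac{\sqrt{3}}{4} f_3 > \tfrac{\sqrt{3}}{4}(\phi(n)^2/6 - F)$. Applying Lemma~\ref{isoperimetric0} and Claim~\ref{b-bound} in succession yields $(\phi(n)-F)^2 > b^2 > 4\pi A > \pi\sqrt{3}(\phi(n)^2/6 - F)$. With the abbreviations $\alpha = \pi\sqrt{3}/6$ and $\phi = \phi(n)$, this rearranges to the quadratic inequality
\[ F^2 - (2\phi - 6\alpha) F + (1-\alpha)\phi^2 > 0 \]
in $F$, whose real roots are $F_\pm = \phi - 3\alpha \pm \sqrt{\alpha(\phi^2 - 6\phi + 9\alpha)}$. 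Hence either $F < F_-$ or $F > F_+$.

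To eliminate the branch $F > F_+$, I would use $2$-connectedness (Claim~\ref{2-connected}), which gives $b \geq 3$ and hence sharpens Claim~\ref{b-bound} to $F < \phi - 3$. Since $3\alpha = \pi\sqrt{3}/2 < 3$, we have $F_+ \geq \phi - 3\alpha > \phi - 3 > F$. Therefore $F < F_-$, and combined with $F \geq 2$ from Claim~\ref{Fgeq2} this forces $F_- > 2$, i.e., $\phi - 3\alpha - 2 > \sqrt{\alpha(\phi^2 - 6\phi + 9\alpha)}$. For $n\in\{5,6\}$ the left-hand side is non-positive while the right-hand side is non-negative, giving an immediate contradiction; for $n\geq 7$ both sides are positive, and squaring produces $(1-\alpha)\phi^2 - 4\phi + 12\alpha + 4 > 0$, a quadratic in $\phi$ whose larger root is just under $38.85$. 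This forces $\phi > 38.85$, i.e., $n \geq 147$.

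For the upper bound on $F$, it then suffices to verify $F_- \leq \phi/11 - 8/11 = \tfrac{1}{11}\sqrt{12n-3} - 1$ for all $\phi \geq \phi(147)$. This reduces to $\sqrt{\alpha(\phi^2 - 6\phi + 9\alpha)} \geq 10\phi/11 + 8/11 - 3\alpha$, where both sides are positive in this range, and squaring and clearing denominators produces yet another quadratic in $\phi$ whose larger root is approximately $20.5$, comfortably below $\phi(147) \approx 38.96$. The main obstacle is simply the bookkeeping around the sign conditions needed to validate the squaring steps and a couple of numerical checks on discriminants; once the quadratic inequality in $F$ is extracted from the isoperimetric estimate, the two conclusions of the claim drop out in tandem.
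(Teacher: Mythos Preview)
Your approach is the same as the paper's: derive the quadratic inequality
\[
F^2 - (2\phi - \pi\sqrt{3})F + (1-\pi\sqrt{3}/6)\phi^2 > 0
\]
from the area/isoperimetric estimate, eliminate the upper branch using $b\geq 3$, and then read off both $n\geq 147$ and the bound $F<(\phi-8)/11$ from the location of the lower root. The paper substitutes $F=2$ directly (using that the quadratic is decreasing on the relevant range) where you instead phrase it as $F_->2$, but these are equivalent.

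There is one small numerical slip. You assert that for $n\in\{5,6\}$ the quantity $\phi-3\alpha-2$ is non-positive, but for $n=6$ one has $\phi(6)=\sqrt{69}-3\approx 5.31$ and $3\alpha=\pi\sqrt{3}/2\approx 2.72$, so $\phi(6)-3\alpha-2\approx 0.59>0$. This does not damage the argument: once you move $n=6$ into the ``square both sides'' case, the resulting quadratic $(1-\alpha)\phi^2-4\phi+12\alpha+4>0$ still rules it out, since $\phi(6)\approx 5.31$ lies between its roots $\approx 4.11$ and $\approx 38.85$. Just correct the case split to ``$n=5$'' versus ``$n\geq 6$''.
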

\begin{proof}
Denote the area of the boundary polygon of $G$ by $A$.
The area of an equilateral triangle of side length $1$ is $\sqrt{3}/4$, hence $A > \frac{\sqrt{3}}{4}f_3$.
By Claim~\ref{f3bound}, $f_3 > \phi(n)^2/6-F$, and by the isoperimetric inequality (Lemma~\ref{isoperimetric0}) and Claim~\ref{b-bound}, $4\pi A < b^2 < (\phi(n)-F)^2$.
In the remainder of this proof we write $\phi=\phi(n)$.
We thus obtain the inequality $\frac{\pi\sqrt{3}}{6}(\phi^2-6F) < (\phi-F)^2$, or when expanded,
\begin{equation}\label{eq:quadratic}
    F^2-(2\phi-\pi\sqrt{3})F+(1-\pi\sqrt{3}/6)\phi^2>0.
\end{equation}
By Claim~\ref{b-bound} and $b\geq 3$, we obtain $F < \phi-\pi\sqrt{3}/2$, so the left-hand side of \eqref{eq:quadratic} is decreasing in $F$.
Thus we can substitute $F=2$ into \eqref{eq:quadratic} to obtain
\begin{equation*}
    4-2(2\phi-\pi\sqrt{3})+(1-\pi\sqrt{3}/6)\phi^2>0,
\end{equation*}
hence $\phi < 4.114\dots$ or $\phi>38.849\dots$, or in terms of $n$, $n < 4.468\dots$ or $n>146.199\dots$.
Since $n\geq 5$ by assumption, we conclude that $n\geq147$, as required.
For the second part of the claim, we solve for $F$ in \eqref{eq:quadratic} to obtain
\begin{equation*}
    F < \phi-\pi\sqrt{3}/2-\frac12\sqrt{\frac{2\pi}{\sqrt3}\phi^2-4\pi\sqrt3\phi+3\pi^2}.
\end{equation*}
In order to show that $F < (\phi-8)/11$, it is sufficient to show that
\begin{equation*}
    \phi-\pi\sqrt{3}/2-\frac12\sqrt{\frac{2\pi}{\sqrt3}\phi^2-4\pi\sqrt3\phi+3\pi^2} \leq \frac{\phi-8}{11}.
    \end{equation*}
This inequality is equivalent to $\phi \leq 2.084\dots$ or $\phi \geq 20.506\dots$.
However, we have already shown that $\phi>38.849\dots$.
Thus we conclude that $F < (\phi-8)/11$, which proves the second part of the claim.
\end{proof}

\subsection{Lattice components}
We define a \emph{lattice component} of $G$ to be any maximal $2$-connected subgraph (on at least $3$ vertices) that lies on some triangular lattice.
Denote the lattice components of $G$ by $G_1,\dots,G_k$.
Denote the number of vertices by $n_i=n(G_i)$, the number of edges by $e_i=e(G_i)$, and the boundary length of $G_i$ by $b_i$.
Assume that $n_1\geq n_2\geq\dots\geq n_k$.
Note that no two lattice components have an edge in common, otherwise their union would be a larger $2$-connected graph on the same lattice.

\begin{claim}\label{lattice-boundary}
For each $i=1,\dots,k$, $b_i\geq \sqrt{12n_i-3}-3$.
\end{claim}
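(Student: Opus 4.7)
The plan is to reduce the claim to Harborth's penny-graph bound (Theorem~\ref{thm0}) by triangulating the inner faces of $G_i$ in a way that leaves the outer boundary unchanged. If every inner face of $G_i$ were already a unit triangle, then the identity $e_i = 3n_i - 3 - b_i$ (obtained from Euler and edge-face double-counting, exactly as in~\eqref{basic-identity}) combined with Harborth's bound would give $b_i \geq \sqrt{12n_i-3}-3$ at once. The only issue is that $G_i$ may contain larger lattice polygons as inner faces.

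To handle this, I would introduce $L_i$, the triangular lattice containing $V(G_i)$. Each inner face of $G_i$ is a simple polygon whose sides are unit lattice edges of $L_i$, and is therefore tiled by the natural unit-triangle tiling of $L_i$. Let $G_i^*$ be obtained from $G_i$ by adding, for each inner face $P$, every lattice point of $L_i$ in the interior of $P$ together with every unit lattice edge of $L_i$ contained in $\overline{P}$. The resulting plane graph $G_i^*$ has its vertex set contained in $L_i$, hence pairwise distances at least $1$; it has the same outer boundary as $G_i$, of length $b_i$; and all of its inner faces are unit equilateral triangles. Write $n_i^* \geq n_i$ and $e_i^*$ for its vertex and edge counts.

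Applying Theorem~\ref{thm0} to $G_i^*$ yields $e_i^* \leq 3n_i^* - \sqrt{12n_i^*-3}$, while the edge-face double-counting applied to $G_i^*$ (all inner faces triangular) gives $e_i^* = 3n_i^* - 3 - b_i$. Subtracting, $b_i \geq \sqrt{12n_i^*-3}-3 \geq \sqrt{12n_i-3}-3$, which is the required inequality.

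The only nontrivial step is checking that this triangulation construction is valid: that the lattice triangulation really does restrict to a triangulation of each inner face by unit triangles (true because two distinct lattice edges cannot cross), that $G_i^*$ remains a plane matchstick graph with the same outer boundary as $G_i$, and that $G_i^*$ inherits $2$-connectedness from $G_i$ so the face-counting identity applies (new vertices are added only inside bounded regions whose boundary cycles are intact). Each of these is a standard property of the triangular lattice and involves no calculation.
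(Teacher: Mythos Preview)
Your proposal is correct and follows essentially the same approach as the paper: fill in the missing lattice vertices and edges inside the boundary of $G_i$ to obtain a fully triangulated lattice graph with the same outer cycle, apply Harborth's penny-graph bound (Theorem~\ref{thm0}) to this filled-in graph, and combine with the identity~\eqref{basic-identity} (with $F=0$) to extract $b_i \geq \sqrt{12n_i^*-3}-3 \geq \sqrt{12n_i-3}-3$. The paper's proof is the same construction, just more tersely stated; the only remark it adds is that one must appeal to Theorem~\ref{thm0} rather than the induction hypothesis, since the filled-in graph may have more than $n$ vertices --- exactly what you do.
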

\begin{proof}
$G_i$ is on a triangular lattice and is $2$-connected, so its boundary is a cycle.
Construct a new graph $G'$ from $G_i$, by filling up all missing lattice vertices and edges inside the boundary of $G_i$.
By Theorem~\ref{thm0}, $e(G')\leq 3n(G')-\sqrt{12n(G')-3}$, since $G'$ is also on the triangular lattice.
(We cannot use induction here, because $G'$ might have more than $n$ vertices.)
By \eqref{basic-identity} applied to $G'$ (which is still $2$-connected and with $F=0$), we obtain $b_i\geq\sqrt{12n(G')-3}-3\geq\sqrt{12n_i-3}-3$, since $G'$ has the same outer boundary as $G_i$.
\end{proof}

We will later need that the largest lattice component is not too small.
We first show that the lattice components cover almost all of $G$ and do not overlap too much.
\begin{claim}\label{lattice-components}
$n-2F\leq \sum_{i=1}^k n_i \leq n+4F$.
\end{claim}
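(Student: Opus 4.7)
My plan is to translate the overlap between lattice components into a local count at each vertex and bound that count using non-triangular faces. For each vertex $v$, let $k_v$ denote the number of lattice components containing $v$ and $m_v$ the number of face-corners at $v$ whose face is not a triangle (inner faces of length $\geq 4$, together with the outer face when $b \geq 4$). Since $\sum_i n_i = \sum_v k_v$, the claim is equivalent to $-2F \leq \sum_v (k_v - 1) \leq 4F$.

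The crucial local fact is that $k_v \leq \max(m_v, 1)$. Indeed, two consecutive edges at $v$ separated by a triangular face-corner lie on a common triangle, hence by Lemma~\ref{latticelemma} in a common lattice component; so the edges at $v$ partition into at most $\max(m_v, 1)$ ``chains'' delimited by non-triangular face-corners, and each non-empty chain is contained in a single lattice component. I will also use the identity $\sum_v m_v = \sum_{i \geq 4} i f_i + b \cdot [b \geq 4]$, together with the bound $\sum_{i \geq 4} i f_i \leq 4F$, which follows from $i \leq 4(i-3)$ for $i \geq 4$.

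For the upper bound, summing $(k_v - 1)^+ \leq (m_v - 1)^+$ yields $\sum_v (k_v - 1) \leq \sum_v m_v - |\{v : m_v \geq 1\}|$. When $b \geq 4$, all $b$ boundary vertices have $m_v \geq 1$, so the subtracted term absorbs the extra $b$ in $\sum_v m_v$; when $b = 3$ there is no outer contribution. In either case we obtain $\sum_v (k_v - 1) \leq 4F$.

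For the lower bound, any vertex on a $3$-cycle belongs to some lattice component, so $V_0 := \{v : k_v = 0\}$ is contained in the set of vertices lying on no $3$-cycle, and $\sum_v(k_v - 1) \geq -|V_0|$. For $v \in V_0$, every inner face-corner at $v$ is non-triangular, and if $v$ lies on the boundary then $b \geq 4$; hence by Claim~\ref{mindegree}, $v$ contributes at least $\deg(v) - 1 \geq 2$ inner non-triangular face-corners. Summing gives $2|V_0| \leq \sum_{i \geq 4} i f_i \leq 4F$, so $|V_0| \leq 2F$. The main bookkeeping hurdle is the case-split on whether the outer face is a triangle and whether $v$ lies on the boundary, but the terms align cleanly in each case.
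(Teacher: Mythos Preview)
Your proof is correct and rests on the same geometric observation as the paper's: distinct lattice components meeting at a vertex $v$ must be separated by non-triangular face-corners (your inequality $k_v\le\max(m_v,1)$ is exactly the paper's $s\ge t-1$, once one accounts for the outer face), and both arguments then bound the total over/under-count via $\sum_{i\ge4} i f_i\le 4F$. The only difference is packaging: you sum $(m_v-1)^+$ directly and fold the outer-face contribution into $m_v$, whereas the paper uses a discharging argument (charges $1/s$ and $(t-1)/s$ on non-triangular inner faces) and handles the outer face as the ``$+1$'' in $s\ge t-1$. One small wording point: ``each non-empty chain is contained in a single lattice component'' should really read ``in at most one'', and the deduction $k_v\le\#\text{chains}$ also uses that every lattice component through $v$ must contain at least one full chain (by maximality and Lemma~\ref{latticelemma}); this is implicit in your sketch but worth stating.
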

\begin{proof}
For the lower bound, consider a vertex $v$ of $G$ that does not belong to any lattice component.
Let $s$ be the number of inner faces of $G$ incident to $v$.
Since $v$ has at least $3$ neighbours (Claim~\ref{mindegree}), $s\geq 2$.
Assign a charge of $1/s$ to each inner face incident to $v$.
Since these faces are all non-triangular, the total charge is $\leq \frac12\sum_{i\geq 4} if_i\leq 2F$.
The total charge on all such vertices counts the number of vertices not covered by any lattice component.
Thus, at least $n-2F$ vertices of $G$ are covered by the lattice components, which gives $\sum_{i=1}^k n_i \geq n-2F$.

For the upper bound, consider a vertex $v$ that belongs to $t\geq 2$ of the $G_i$.
Now let $s$ be the number of non-triangular inner faces of $G$ incident to $v$.
Since two adjacent faces incident to $v$ cannot belong to distinct $G_i$, there is at least one non-triangular face between any two faces at $v$ belonging to distinct $G_i$, so $v$ is incident to at least $t$ non-triangular faces, of which one could be the outer face.
Hence $s\geq t-1$.
Assign a charge of $(t-1)/s$ to each non-triangular inner face incident to $v$.
This gives a total charge of $t-1$ for each $v$, which is exactly its contribution to $\sum_i n_i - n$.
Since $(t-1)/s\leq 1$, we obtain that the total charge is at most $\sum_{i\geq 4}if_i\leq 4F$.
Thus, $\sum_i n_i \leq n+4F$.
\end{proof}

From now on we concentrate on the largest lattice component $G_1$.
We first show that it covers a considerable proportion of $G$.

\begin{claim}\label{n1bound}
$n_1 > 3n/4$.
\end{claim}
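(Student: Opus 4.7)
The plan is to assume for contradiction that $n_1 \leq 3n/4$ and sandwich $\sum_{i=1}^k \phi(n_i)$ between two incompatible bounds.

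For the upper bound, first show $\sum_{i=1}^k b_i \leq b + 4F$. The key observation is that any boundary edge $e$ of some $G_i$ which is not on the boundary of $G$ must be incident to a non-triangular face of $G$ on the side opposite $G_i$: otherwise, Lemma~\ref{latticelemma} would force the third vertex of the adjacent triangle onto the lattice of $G_i$, and adjoining this triangle to $G_i$ would contradict the maximality of $G_i$. Since distinct lattice components share no edges, the total number of such interior boundary edges is at most $\sum_{i\geq 4} i f_i \leq 4F$ (using $i \leq 4(i-3)$ for $i \geq 4$). Combining with Claims~\ref{b-bound} and~\ref{lattice-boundary} gives
\[\sum_{i=1}^k \phi(n_i) \;\leq\; \sum_{i=1}^k b_i \;<\; \phi(n) + 3F.\]

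For the matching lower bound, Claim~\ref{lattice-components} gives $\sum n_i \geq n - 2F$. With each $n_i \in [3, n_1]$ and $n_1 \leq 3n/4$, iterated application of Lemma~\ref{concave} allows moving a unit of weight from a smaller $n_j$ into $n_1$ and merging smaller components together; both operations strictly decrease $\sum \phi(n_i)$ by concavity, reducing any valid configuration to the extremal two-part configuration $(3n/4,\,n/4-2F)$, which is itself valid since $n/4-2F \geq 3$ for $n \geq 147$ in the admissible range of $F$. Hence
\[\sum_{i=1}^k \phi(n_i) \;\geq\; \phi(3n/4) + \phi(n/4 - 2F).\]

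Together the two bounds force $\phi(3n/4) + \phi(n/4 - 2F) < \phi(n) + 3F$, and the contradiction comes from showing this fails for every $n \geq 147$ and every admissible $F$ (lying in $[2,(\phi(n)-8)/11]$ by Claims~\ref{Fgeq2} and~\ref{Fbound1}). Since the left side is decreasing and the right increasing in $F$, it suffices to check at $F = (\phi(n)-8)/11$; expanded, the inequality reads $\sqrt{9n-3} + \sqrt{3n-24F-3} > \sqrt{12n-3} + 3F + 3$, whose two sides grow asymptotically like $(3+\sqrt{3})\sqrt{n}$ and $(28\sqrt{3}/11)\sqrt{n}$, with leading-coefficient gap $\approx 0.32>0$. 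A direct evaluation at $n=147,\,F=2$ gives roughly $50.08$ against $44.97$, and the positive asymptotic gap extends the inequality to all larger $n$. The main obstacle is this last numerical step: the asymptotic gap is only a modest fraction of $\sqrt{n}$, so subleading corrections must be controlled carefully for small $n$. Fortunately the threshold $3n/4$ is chosen quite loosely (the outline notes that $0.97n$ is achievable asymptotically), leaving ample slack for a clean verification.
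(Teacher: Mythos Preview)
Your proposal follows essentially the same strategy as the paper's proof: bound $\sum_{i=1}^k b_i$ from above by $b+\sum_{i\geq 4} if_i \leq b+4F$ and from below by $\sum_i\phi(n_i)$, then minimise the latter by concavity to reach the two-part configuration and derive a contradiction. The upper-bound observation, the use of Claim~\ref{lattice-boundary}, and the concavity reduction are all the same ideas as in the paper.

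The only substantive difference is in the endgame. The paper substitutes the bound $F<\tfrac{1}{11}\sqrt{12n-3}-1$ into the upper estimate \emph{before} comparing, obtaining $\sum_i b_i<\tfrac{14}{11}\sqrt{12n-3}-6$, and then exploits the convenient numerical coincidence $\bigl(\tfrac{14}{11}-\tfrac{\sqrt3}{2}\bigr)^2<\tfrac16$ to square cleanly to the linear inequality
\[
\tfrac{1}{12}(12n-3)-3 < 24F < \tfrac{24}{11}\sqrt{12n-3}-24,
\]
whose discriminant is negative, so it has no real solution. This disposes of all $n$ at once with no asymptotic-plus-base-case patchwork.

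Your final step, by contrast, is not yet a proof. You correctly reduce to checking $\sqrt{9n-3}+\sqrt{3n-24F-3}\geq\sqrt{12n-3}+3F+3$ at the maximal admissible $F$, but then you evaluate only at $n=147$ (and your quoted values $50.08$ versus $44.97$ are miscomputed; the actual figures are roughly $56.1$ versus $51.0$) and appeal to the asymptotic gap for larger $n$. An asymptotic leading-term comparison does not by itself control the intermediate range, and you acknowledge this yourself. To close the argument you would still need either a monotonicity argument in $n$ after substituting $F=(\phi(n)-8)/11$, or a uniform algebraic bound---which is precisely what the paper's squaring trick provides.
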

\begin{proof}
Suppose that all $n_i\leq 3n/4$.
We will find a contradiction by bounding $\sum_{i=1}^k b_i$ from above and below.
Since no two $G_i$ have a common boundary edge, this sum counts the number of edges on the boundaries of all lattice components.

For the upper bound, note that if an edge is on the boundary of some lattice component then it borders a non-triangular face on the outside of the lattice component. This gives
\begin{equation}\label{b_i-upper}
\sum_{i=1}^k b_i \leq b + \sum_{i\geq 4} i f_i < \sqrt{12n-3} -3 -F + 4F < \frac{14}{11}\sqrt{12n-3}-6
\end{equation}
from Claims~\ref{b-bound} and \ref{Fbound1}.
For the lower bound we just use Claim~\ref{lattice-boundary}:
\begin{equation}\label{b_i-lower}
\sum_{i=1}^k b_i \geq \sum_{i=1}^k (\sqrt{12n_i-3}-3) =\sum_{i=1}^k\phi(n_i).
\end{equation}
We lower bound the right-hand side by relaxing it to a continuous optimisation problem.
We have $k\leq N/3$, where $N=\sum_{i=1}^k n_i$.
For later reference, note that $3n/4+3 < N < 6n/4 - 3$ since $n\geq 13$, by Claims~\ref{lattice-components} and \ref{Fbound1}.
In particular, $k\geq 2$.
For each $\ell=2,\dots,\lfloor N/3\rfloor$, define $\Phi_\ell(x_1,\dots,x_\ell)=\sum_{i=1}^\ell \phi(x_i)$ on the domain
\[ D_\ell = \left\{(x_1,\dots,x_\ell)\in\bR^\ell : \tfrac34n \geq x_1\geq\dots \geq x_\ell\geq 3, \sum_{i=1}^\ell x_i = N\right\}.\]
For each $\ell$, the function $\Phi_\ell$ has a minimum value $y_\ell$ on $D_\ell$.
Among all of these $\ell$, fix one, say $m$, that minimises $y_\ell$.
Fix $(x_1,\dots,x_m)\in D_m$ such that $\Phi_m(x_1,\dots,x_m)=y_m$.
By strict concavity of $\phi$ (Lemma~\ref{concave}) we cannot have two $x_i$ in the open interval $(3,\frac34n)$.
If $m\geq 3$, we cannot have some $x_i=3$ and another $x_j\leq 3n/4-3$, since we can then replace both by a single variable equal to $x_j+3$, thereby finding \begin{align*}
y_{m-1} &\leq y_m +\phi(3+x_j)-\phi(x_j)-\phi(3)\\
&\leq y_m +\phi(6)-\phi(3)-\phi(3)\quad\text{by Lemma~\ref{concave}}\\
&< y_m.
\end{align*}
It follows that we cannot have any $x_i=3$, since this would imply that all other $x_j\in(3n/4-3,3n/4]$, but $3+3n/4 < N$ and $3+2(3n/4-3) > N$ as noted before.
We cannot have two $x_i$ equal to $\frac34n$, since $6n/4 > N$.
Thus necessarily $m=2, x_1=\frac34n, x_2=N-\frac34n$.
It follows that
\begin{align}
    \sum_{i=1}^k\phi(n_i) &= \Phi_k(n_1,\dots,n_k)\geq \Phi_2(x_1,x_2)\notag \\
    &\geq\Phi_2(x_1+1/16,x_2-1/16) \quad\text{by Lemma~\ref{concave}}\notag\\
    &= \sqrt{(3/4)(12n-3)} - 3 + \sqrt{12(N-3n/4)-15/4}-3\notag\\
    &\geq \frac{\sqrt{3}}{2}\sqrt{12n-3} + \sqrt{12(n/4-2F)-15/4} - 6\quad\text{by Claim~\ref{lattice-components}.}\label{phi-lower}
\end{align}

Putting \eqref{b_i-upper}, \eqref{b_i-lower} and \eqref{phi-lower} together, we obtain
\[ \frac16(12n-3) > \Bigl(\frac{14}{11}-\frac{\sqrt{3}}{2}\Bigr)^2(12n-3) > 12(n/4-2F)-15/4 = \frac14(12n-3)-3-24F.\]
Again using Claim~\ref{Fbound1} it follows that
\[ \frac{1}{12}(12n-3)-3 < 24F < \frac{24}{11}\sqrt{12n-3}-24,\]
which has no solutions in $n$, a contradiction.
\end{proof}

The next inequality comes from applying the induction hypothesis to $G_1$ and a slight enlargement of the remainder $G-G_1$.
\begin{claim}\label{eq1}
$\sqrt{12(n-n_1)} < 6F + \sqrt{12n-3}-\sqrt{12n_1-3}$.
\end{claim}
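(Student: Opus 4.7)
The plan is to apply the induction hypothesis to two matchstick subgraphs that together cover $G$ — the lattice component $G_1$ itself and a slight enlargement of $G-V(G_1)$ — and combine the resulting bounds using \eqref{assumption1}. First, $G\neq G_1$: otherwise $G$ would lie on a single triangular lattice and Theorem~\ref{thm0} would contradict \eqref{assumption1}, so $n_1 < n$ and by induction $e_1 \leq 3n_1 - \sqrt{12n_1-3}$. Let $T$ be the set of boundary vertices of $G_1$ that have at least one neighbour in $V(G)\setminus V(G_1)$, and define $H := G[V(G)\setminus V(G_1)\cup T]$, a matchstick graph on $n-n_1+|T|$ vertices. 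By the definition of $T$ every edge of $G$ not in $G_1$ already lies in $H$, so $E(G)=E(G_1)\cup E(H)$ with intersection equal to the set of edges of $G_1$ with both endpoints in $T$; denote the size of that intersection by $e_T$. Thus $e = e_1 + e(H) - e_T$. Applying induction to $H$ and combining with \eqref{assumption1} gives
\[ \sqrt{12(n-n_1+|T|)-3} < 3|T| - e_T + \sqrt{12n-3} - \sqrt{12n_1-3}. \]

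The heart of the proof is the bound $|T|\leq 2F$. At each $v\in T$ I would examine the two \emph{extreme} external edges at $v$ — the first and last external edge in the cyclic order at $v$, each adjacent to a boundary edge of $G_1$ at $v$. The face of $G$ bounded between such an extreme external edge $vu$ and the adjacent boundary edge $vw$ of $G_1$ at $v$ cannot be the triangle $vuw$: by Lemma~\ref{latticelemma} this would force $u$ onto $G_1$'s lattice, and then $G_1\cup\{u,vu,uw\}$ would be a strictly larger $2$-connected subgraph on the same lattice, contradicting the maximality of $G_1$. So each $v\in T$ gives rise to two distinct non-triangular face incidences, yielding $2|T|\leq \sum_{i\geq 4} i f_i \leq 4F$ via the elementary inequality $i\leq 4(i-3)$ for $i\geq 4$. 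Combined with $e_T\geq 0$ this gives $3|T|-e_T\leq 6F$, and $|T|\leq 2F<n_1$ (using Claims~\ref{Fbound1} and \ref{n1bound}) validates the induction step on $H$. Since $|T|\geq 1$ as well (otherwise $G$ would be disconnected, contradicting Claim~\ref{2-connected}), we have $\sqrt{12(n-n_1)}\leq \sqrt{12(n-n_1+|T|)-3}$, and the displayed inequality then delivers the claim.

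The main obstacle is the structural step $|T|\leq 2F$: the argument above cleanly handles inner extreme faces, but a subtle case arises if one of the two extreme faces at some $v\in T$ is the outer face of $G$. This face is still non-triangular by the same maximality argument, but the outer face is not counted in $F$. I would expect to resolve this by bounding the number of such outer-face extreme incidences separately via the structure of $G$'s outer boundary, using that each vertex of $G$ appears on the outer face at most once so the outer face can act as an extreme face for at most a controlled number of $v\in T$.
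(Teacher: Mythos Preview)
Your overall framework is exactly the paper's: define the ``attachment'' set $T$ (the paper calls it $K$), form $H=G[V(G-G_1)\cup T]$, bound $e_1$ and $e(H)$, combine with \eqref{assumption1}, and then use $1\leq|T|\leq 2F$ to reach the claim. The extra $e_T$ term and the use of induction rather than Theorem~\ref{thm0} for $e_1$ are harmless refinements.

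The genuine gap is precisely the one you flag at the end, and it is not a minor wrinkle. Your counting argument ``$2|T|\leq\sum_{i\geq 4}if_i$'' only holds if \emph{both} extreme faces at every $v\in T$ are inner. When one of them is the outer face you lose one incidence per such vertex, and your proposed remedy (bounding the number of outer-face extreme incidences by the boundary structure) does not recover the constant you need: you would only get $|T|\leq 2F+\tfrac12|T_{\text{out}}|$, and the natural bounds on $|T_{\text{out}}|$ are in terms of $b_*$, not $F$. If instead you retreat to using just the one guaranteed inner extreme face and the crude bound $|T|\leq\sum_{i\geq4}if_i\leq 4F$, the eventual conclusion of the paper fails numerically (you end up needing $\tfrac{3}{11}(1+\tfrac{\sqrt3}{2})<\tfrac12$, which is false).

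The paper avoids the outer face entirely by counting differently. Rather than charging two face incidences to each $v\in K$, it first notes that every $v\in K$ lies on at least one \emph{inner} face $\Gamma$ of $G$ not on the lattice of $G_1$ (else all inner faces at $v$ are on the lattice and can be absorbed into $G_1$ by maximality). Then for any such $\Gamma$ of length $i$, at most $i-2$ of its vertices lie in $K$: if $i-1$ did, the remaining vertex would have both $\Gamma$-neighbours on the lattice and so would itself be on the lattice by Lemma~\ref{latticelemma}, forcing $\Gamma$ to be a lattice face. Hence
\[
|K|\;\leq\;\sum_{\substack{\Gamma\ \text{inner}\\ \text{not on lattice}}}(i(\Gamma)-2)\;\leq\;\sum_{i\geq 4}(i-2)f_i\;\leq\;2F,
\]
using $i-2\leq 2(i-3)$ for $i\geq 4$. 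This $(i-2)$-per-face bound is the missing idea: it gets $|T|\leq 2F$ from a \emph{single} inner face per vertex, so the outer face never enters. Your ``two extreme faces'' approach can be salvaged by grafting on exactly this observation (take one inner extreme face per $v$ --- it is non-lattice since it contains an edge of $G_1$ and a vertex outside $G_1$ --- and apply the $(i-2)$ bound), but as written the proposal does not close the gap.
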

\begin{proof}
Let $K$ be the set of vertices of $G_1$ that are joined to some vertex in $G-G_1$, and let $G'$ be the subgraph of $G$ induced by $V(G-G_1)\cup K$.
Since $G_1$ is on the lattice, we already know by Harborth's Theorem~\ref{thm0} that $e_1\leq 3n_1-\sqrt{12n_1-3}$.
If $K=\emptyset$, then $G_1=G$ since $G$ and $G_1$ are connected, and then the assumption \eqref{assumption1} is contradicted.
Thus $K\neq\emptyset$.
Let $n'=n(G') = n-n_1+|K|$ and $e'=e(G')$.
Then $G'$ and $G_1$ cover all the edges of $G$, hence $e\leq e_1+e'$.
To bound $e'$ from above, we will apply induction, but then we need to ensure that $K\neq V(G_1)$.
We do this by bounding $|K|$ from above.

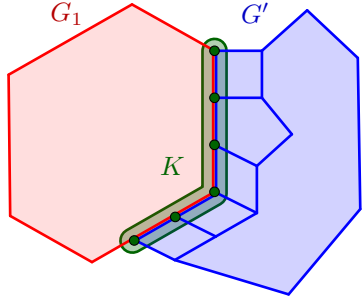
\begin{figure}
    \centering
\definecolor{ffqqqq}{rgb}{1,0,0}
\definecolor{qqqqct}{rgb}{0,0,0.7647058823529411}
\definecolor{bvqqqq}{rgb}{0.7098039215686275,0,0}
\definecolor{qqqqff}{rgb}{0,0,1}
\definecolor{uuuuuu}{rgb}{0.26666666666666666,0.26666666666666666,0.26666666666666666}
\definecolor{qqwuqq}{rgb}{0,0.39215686274509803,0}
\begin{tikzpicture}[line cap=round,line join=round,>=triangle 45,scale=1.2, rotate=90]
\draw[color=qqwuqq, line width=10pt] (0.8660254037844386,0) -- (-0.71282032302755,0) -- (-1.2328970236140115,0.8997334191343);
\draw[color=qqwuqq!30!white, line width=8pt] (0.8660254037844386,0) -- (-0.71282032302755,0) -- (-1.2328970236140115,0.8997334191343);
\fill[line width=2pt,color=qqqqff,fill=qqqqff,fill opacity=0.18] (-1.2128970236140115,0.8997334191343) -- (-1.4464476158121065,0.4355630615241674) -- (-1.8256607784817434,-0.8118273618016772) -- (-0.8830371369361856,-1.5973470630896291) -- (0.7870743772396426,-1.5701497327271368) -- (1.3131470128738905,-1.0152047894564749) -- (0.8660254037844385,-0.519615242270663) -- (0.8660254037844386,0) -- (0.34641016151377557,0) -- (-0.17320508075688718,0) -- (-0.69282032302755,0) -- (-0.9528586733207809,0.44986670956715014) -- cycle;
\fill[line width=2pt,color=ffqqqq,fill=ffqqqq,fill opacity=0.13] (-0.9585597254676749,2.239470744218566) -- (-1.4691394169768557,1.3430331179047497) -- (-0.69282032302755,0) -- (0.8660254037844386,0) -- (1.378177858738979,0.9042675836742334) -- (0.6013407899841223,2.2570884027808744) -- cycle;
\draw [line width=1pt,color=qqqqff] (-0.17320508075688718,0)-- (-0.40675567295498205,-0.4641703576101331);
\draw [line width=1pt,color=qqqqff] (-0.40675567295498205,-0.4641703576101331)-- (-0.05637934900207353,-0.8478843547658871);
\draw [line width=1pt,color=qqqqff] (-0.05637934900207353,-0.8478843547658871)-- (0.3464101615137755,-0.5196152422706628);
\draw [line width=1pt,color=qqqqff] (-0.69282032302755,0)-- (-0.9263709152256449,-0.464170357610133);
\draw [line width=1pt,color=qqqqff] (-0.9263709152256449,-0.464170357610133)-- (-0.40675567295498205,-0.4641703576101331);
\draw [line width=1pt,color=qqqqff] (-0.9263709152256449,-0.464170357610133)-- (-1.1864092655188756,-0.014303648042982953);
\draw [line width=1pt,color=qqqqff] (-1.1864092655188756,-0.014303648042982953)-- (-0.9528586733207809,0.44986670956715014);
\draw [line width=1pt,color=qqqqff] (-1.4464476158121065,0.4355630615241673)-- (-1.1864092655188756,-0.014303648042982953);
\draw [line width=1pt,color=qqqqff] (0.34641016151377557,0)-- (0.3464101615137755,-0.5196152422706628);
\draw [line width=1pt,color=qqqqff] (0.3464101615137755,-0.5196152422706628)-- (0.8660254037844385,-0.519615242270663);
\draw [color=bvqqqq](1.5,1.9) node[anchor=north west] {$G_1$};
\draw [color=qqqqct](1.5,-0.19) node[anchor=north west] {$G'$};
\draw [line width=1pt,color=qqqqff] (-1.2128970236140115,0.8997334191343)-- (-1.4464476158121065,0.4355630615241674);
\draw [line width=1pt,color=qqqqff] (-1.4464476158121065,0.4355630615241674)-- (-1.8256607784817434,-0.8118273618016772);
\draw [line width=1pt,color=qqqqff] (-1.8256607784817434,-0.8118273618016772)-- (-0.8830371369361856,-1.5973470630896291);
\draw [line width=1pt,color=qqqqff] (-0.8830371369361856,-1.5973470630896291)-- (0.7870743772396426,-1.5701497327271368);
\draw [line width=1pt,color=qqqqff] (0.7870743772396426,-1.5701497327271368)-- (1.3131470128738905,-1.0152047894564749);
\draw [line width=1pt,color=qqqqff] (1.3131470128738905,-1.0152047894564749)-- (0.8660254037844385,-0.519615242270663);
\draw [line width=1pt,color=qqqqff] (0.8660254037844385,-0.519615242270663)-- (0.8660254037844386,0);
\draw [line width=1pt,color=qqqqff] (0.8660254037844386,-0.017)-- (0.34641016151377557,-0.017);
\draw [line width=1pt,color=qqqqff] (0.34641016151377557,-0.017)-- (-0.17320508075688718,-0.017);
\draw [line width=1pt,color=qqqqff] (-0.17320508075688718,-0.017)-- (-0.69282032302755,-0.017);
\draw [line width=1pt,color=qqqqff] (-0.71282032302755,-0.03)-- (-0.9728586733207809,0.41986670956715014);
\draw [line width=1pt,color=qqqqff] (-0.9728586733207809,0.41986670956715014)-- (-1.2328970236140115,0.8697334191343);
\draw [line width=1pt,color=ffqqqq] (-0.9585597254676749,2.239470744218566)-- (-1.4691394169768557,1.3430331179047497);
\draw [line width=1pt,color=ffqqqq] (-1.4691394169768557,1.3430331179047497)-- (-0.69282032302755,0);
\draw [line width=1pt,color=ffqqqq] (-0.69282032302755,0.015)-- (0.8660254037844386,0.015);
\draw [line width=1pt,color=ffqqqq] (0.8660254037844386,0)-- (1.378177858738979,0.9042675836742334);
\draw [line width=1pt,color=ffqqqq] (1.378177858738979,0.9042675836742334)-- (0.6013407899841223,2.2570884027808744);
\draw [line width=1pt,color=ffqqqq] (0.6013407899841223,2.2570884027808744)-- (-0.9585597254676749,2.239470744218566);
\draw [color=qqwuqq](-0.2,0.7) node[anchor=north west] {$K$};
\draw [fill=qqwuqq] (0.8660254037844386,0) circle (1.5pt); 
\draw [fill=qqwuqq] (0.34641016151377557,0) circle (1.5pt); 
\draw [fill=qqwuqq] (-0.17320508075688718,0) circle (1.5pt); 
\draw [fill=qqwuqq] (-0.69282032302755,0) circle (1.5pt); 
\draw [fill=qqwuqq] (-0.97,0.43) circle (1.5pt);
\draw [fill=qqwuqq] (-1.23,0.88) circle (1.5pt);
\end{tikzpicture}
    \caption{Decomposition of $G$ into $G_1$ and $G'$ with common vertex set $K$}\label{eq1fig}
\end{figure}    
Each vertex $v$ in $K$ belongs to some inner face of $G$ not lying on the lattice of $G_1$, otherwise all inner faces around $v$ lie on the same lattice, so must be part of $G_1$ by maximality.
Consider any inner face $\Gamma$ not lying on the lattice of $G_1$, so with at least one vertex not in $K$.
Denote the length of the cycle bounding $\Gamma$ by $i$.
Suppose that all but one of the vertices on this cycle lie in $K$ and consider the remaining vertex $v$ with neighbours $u$ and $w$ on $\Gamma$.
Since $u$ and $w$ are on the lattice of $G_1$, $v$ also lies on the same lattice by Lemma~\ref{latticelemma}, but then $\Gamma$ lies on the lattice of $G_1$, a contradiction.
Hence at most $i-2$ vertices of $K$ belong to $\Gamma$.
Since $\Gamma$ is a non-triangular face of $G$, we find the upper bound
\begin{equation*}
    |K| \leq \sum_{i\geq4}(i-2)f_i \leq 2F.
\end{equation*}
By Claims~\ref{Fbound1} and \ref{n1bound} it follows that $2F < n_1$, hence
$K$ cannot be all of $G_1$ so $G'$ is a proper subset of $G$, and we can apply induction to $G'$ to obtain \[e\leq e_1+e' \leq 3n_1-\sqrt{12n_1-3} + 3n'-\sqrt{12n'-3}.\]
Using assumption~\eqref{assumption1} and $n_1+n'=n+|K|$, we obtain
\begin{equation}\label{eq2}\sqrt{12(n-n_1+|K|)-3} < 3|K| + \sqrt{12n-3}-\sqrt{12n_1-3}.
\end{equation}
The claim now follows from the bounds $1\leq |K|\leq 2F$.
\end{proof}
Let $b_*$ be the number of boundary edges of $G$ not on the boundary of $G_1$.
\begin{claim}\label{b*upperbound}
$b_* < \sqrt{12n-3}-\sqrt{12n_1-3}$.
\end{claim}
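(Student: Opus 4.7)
The plan is to exploit the identity $b_* = b - b_1 + b_{G_1,\text{int}}$, where $b_{G_1,\text{int}}$ denotes the number of edges of $\partial G_1$ that do not lie on $\partial G$. Since both $b$ and $b_1$ decompose with respect to the shared quantity $b_s := |\partial G \cap \partial G_1|$ (giving $b = b_s + b_*$ and $b_1 = b_s + b_{G_1,\text{int}}$), this identity is immediate. Combined with $b < \phi(n) - F$ from Claim~\ref{b-bound} and $b_1 \geq \phi(n_1)$ from Claim~\ref{lattice-boundary}, the target inequality reduces to the structural bound $b_{G_1,\text{int}} \leq F$, which would yield $b_* < (\phi(n) - F) - \phi(n_1) + F = \sqrt{12n-3} - \sqrt{12n_1-3}$.

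To establish $b_{G_1,\text{int}} \leq F$, I would first observe that for every edge $e \in \partial G_1 \setminus \partial G$, the face $f$ of $G$ lying on the outer-$G_1$ side of $e$ is a non-triangular inner face. If $f$ were a triangle, its third vertex would be at unit distance from the two lattice endpoints of $e$, hence on the lattice of $G_1$ by Lemma~\ref{latticelemma}; adjoining this vertex together with the two missing unit edges would produce a strictly larger $2$-connected lattice subgraph of $G$, contradicting the maximality of $G_1$. Thus each such edge is charged to a non-triangular inner face $f$ of $G$ contained in the outer face of $G_1$.

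Next, for each such face $f$ of size $i$, let $k_f$ be the number of edges charged to it. All these edges have both endpoints in $V(G_1)$, so they are a subset of the edges of $f$ whose endpoints both lie in $V(G_1)$. Writing the cycle bounding $f$ with its $j$ vertices in $V(G_1)$ partitioned into $r$ maximal consecutive runs, and setting $s := i - j$, the number of edges of $f$ between two lattice vertices equals $j - r$ (this formula uses $s \geq 1$, which is automatic: if $s = 0$ then $f$ would be an inner face of $G_1$, whereas $f$ is contained in its unbounded outer face). Hence $k_f \leq j - r$, and it suffices to prove $s + r \geq 3$. The case $r = 0$ gives $k_f = 0$ trivially. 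The case $s = 1$ is ruled out by a second application of Lemma~\ref{latticelemma}: the single non-lattice vertex of $f$ would be at unit distance from two lattice vertices, forcing it onto the lattice and into $V(G_1)$ by maximality, a contradiction. So $s \geq 2$, and since $r \geq 2$ forces $s \geq r \geq 2$ (the non-lattice vertices also split into $r$ runs), we conclude $s + r \geq 3$ in every case, giving $k_f \leq i - 3$.

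Summing, $b_{G_1,\text{int}} = \sum_f k_f \leq \sum_{i \geq 4}(i-3)f_i = F$, which completes the argument. The main obstacle is the structural case analysis giving $k_f \leq i-3$: the right vertex-classification on the cycle of $f$ has to be chosen, and both Lemma~\ref{latticelemma} and the maximality of $G_1$ must be invoked jointly to exclude the tight configuration $s=1$. Once that is in place, the rest is a short assembly of Claims~\ref{b-bound} and \ref{lattice-boundary}.
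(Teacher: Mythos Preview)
Your proof is correct and follows essentially the same approach as the paper: both establish the inequality $b_1 \leq (b - b_*) + F$ (equivalently, your $b_{G_1,\text{int}} \leq F$) by showing that each non-triangular inner face $f$ of $G$ contributes at most $i-3$ edges to $\partial G_1$, using Lemma~\ref{latticelemma} together with the maximality of $G_1$, and then conclude via Claims~\ref{b-bound} and~\ref{lattice-boundary}. Your run-decomposition argument for the $i-3$ bound is a somewhat more elaborate packaging of the same idea the paper handles by a direct case split on whether $i-1$ or $i-2$ edges of the face lie on $\partial G_1$.
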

\begin{proof}
Each edge on the boundary of $G_1$ borders either the outer face of $G$ or an inner non-triangular face $\Gamma$ of $G$.
Let $i$ denote the length of the cycle bounding $\Gamma$.
Not all the edges of $\Gamma$ are on the boundary of $G_1$.
Thus $\Gamma$ contributes at most $i-1$ edges to the boundary of $G_1$.
If $\Gamma$ has $i-1$ or $i-2$ of its edges on the boundary of $G_1$, then by Lemma~\ref{latticelemma}, $\Gamma$ has to be a lattice polygon, so has to be part of $G_1$ by maximality.
Thus $\Gamma$ has at most $i-3$ of its edges on the boundary of $G_1$.
Therefore, \[b_1\leq b-b_* + \sum_{i\geq 4} (i-3)f_i = b-b_* + F.\]
The claim now follows from the inequalities in Claims~\ref{b-bound} and \ref{lattice-boundary}.
\end{proof}

\subsection{An improved isoperimetric inequality}
Our next aim is to find a better upper bound for $F$ in a form similar to the upper bound of $b_*$ in Claim~\ref{b*upperbound}.
This bound (Claim~\ref{Fbound2}) will then be combined with Claim~\ref{eq1} to obtain a contradiction.
Claim~\ref{Fbound2} will follow from our isoperimetric inequality in Lemma~\ref{isoperimetric}, which we now restate and prove.
\begin{isoperimetriclemma}
Let $P$ be a simple polygon with perimeter $b$, area $A$, and with the total length of the sides not parallel to any side of some fixed regular hexagon at most $b_*$.
Then \[8\sqrt{3}A \leq \bigl(b+ \bigl(\tfrac{2}{\sqrt{3}}-1\bigr)b_*\bigr)^2.\]
\end{isoperimetriclemma}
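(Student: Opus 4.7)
The plan is to reduce to the classical L'Huilier inequality $8\sqrt{3}A\le b^2$, valid when every side of $P$ is parallel to a hexagon side (the case $b_*=0$). The key elementary fact is that any segment of length $\ell$ can be written as a sum of two vectors along two adjacent hexagon directions with magnitudes summing to at most $(2/\sqrt{3})\ell$: if the segment makes angle $\theta$ with the nearer of the two directions (so $|\theta|\le 30^\circ$), the decomposition has magnitudes $\ell(\cos\theta-\sin|\theta|/\sqrt{3})$ and $2\ell\sin|\theta|/\sqrt{3}$, summing to $\ell(\cos\theta+\sin|\theta|/\sqrt{3})\le(2/\sqrt{3})\ell$, with equality at $|\theta|=30^\circ$.

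Using this, for each edge $e$ of $P$ not parallel to a hexagon direction I would attach on the exterior side of $e$ a triangle $T_e$ whose other two sides realise this decomposition, and form $R=P\cup\bigcup_e T_e$. Let $P^*$ denote the outer boundary of $R$, regarded as a disjoint union of simple closed polygons (one per connected component of $R$). Every edge of $P^*$ is parallel to a hexagon direction; the area enclosed by $P^*$ is at least $A(R)\ge A$; and its total length is at most $(b-b_*)+(2/\sqrt{3})b_* = b+(\tfrac{2}{\sqrt{3}}-1)b_*$, since each non-hexagonal edge has been replaced by a polyline of length at most $(2/\sqrt{3})$ times its own (and any edges falling in the interior of $R$ only further shorten the outer boundary).

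Applying L'Huilier to each simple component $P_i^*$ (with enclosed area $A_i$ and perimeter $b_i$) and aggregating via subadditivity of $\sqrt{\cdot}$, one obtains
\[\sqrt{8\sqrt{3}\,A}\le\sqrt{8\sqrt{3}\sum_i A_i}\le\sum_i\sqrt{8\sqrt{3}\,A_i}\le\sum_i b_i\le b+\bigl(\tfrac{2}{\sqrt{3}}-1\bigr)b_*,\]
which squares to the lemma.

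The main obstacle is the case where triangles $T_e$ overlap $P$ or one another, so that $R$ is multiply connected and $P^*$ is a nontrivial union of simple polygons; the bounds on enclosed area and total length above still hold but require some care to verify. A conceptually cleaner alternative I would keep in reserve is to bound $A$ directly in terms of the three widths $w_1,w_2,w_3$ of $P$ perpendicular to the hexagon sides: integrating $\sum_k |\vec t\cdot\vec n_k|\,ds$ along $\partial P$ gives the projection inequality $2(w_1+w_2+w_3)\le\sqrt{3}(b-b_*)+2b_*$, since the integrand equals $\sqrt{3}$ on hexagonal edges and is at most $2$ everywhere; then Brunn--Minkowski combined with a direct zonogon computation gives $A\le\tfrac{\sqrt{3}}{18}(w_1+w_2+w_3)^2$, and the two bounds combine to the same conclusion.
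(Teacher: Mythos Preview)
Your main approach is correct and rests on the same elementary fact the paper uses: a segment of length $\ell$ decomposes into two hexagon-direction vectors of total length at most $\tfrac{2}{\sqrt{3}}\ell$ (the paper obtains this via the cosine rule in a $120^\circ$ triangle). The difference is in implementation. You attach an exterior triangle to each non-hexagonal edge and then have to worry about overlaps; the paper instead first \emph{rearranges} the edges of $P$ into a convex polygon $P'$ (same $b$, same $b_*$, area only larger), then circumscribes a hexagon $H$ with sides in the six fixed directions, and bounds $b(H)\le b+(\tfrac{2}{\sqrt{3}}-1)b_*$ by applying the $\tfrac{2}{\sqrt{3}}$ fact once at each corner of $H$. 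The convexification step is what buys the paper a clean argument: there is exactly one circumscribed hexagon, no unions, no overlaps, and L'Huilier (or just the hexagon isoperimetric inequality) applies to $H$ directly. Your route reaches the same inequality but carries the bookkeeping you flag; it can be made rigorous, but the paper's detour through convexity is shorter.

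Your reserve argument via widths is a genuinely different idea. The projection inequality $2(w_1+w_2+w_3)\le\sqrt{3}(b-b_*)+2b_*$ is correct as stated (the sum $\sum_k|\vec t\cdot\vec n_k|$ indeed equals $\sqrt{3}$ on hexagonal edges and is at most $2$ everywhere, the maximum occurring at $30^\circ$). The step $A\le\tfrac{\sqrt{3}}{18}(w_1+w_2+w_3)^2$, however, is asserted rather than proved: ``Brunn--Minkowski plus a zonogon computation'' is not enough of a pointer, and this is exactly the nontrivial part. It is true (the intersection of the three minimal strips is a hexagon whose area one can bound by AM--GM on the three widths), but you would need to supply that calculation for the alternative to stand on its own.
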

\begin{proof}
Let $h$ be a fixed regular hexagon.
For any simple polygonal path (open or closed) $Q$, we denote its length by $b(Q)$ and the total length of its sides not parallel to any side of $h$ by $b_*(Q)$. If $Q$ is closed (a polygon), we denote its area by $A(Q)$.

We first modify $P$ into a convex polygon $P'$ with $b(P)=b(P')$, $b_*(P)=b_*(P')$, and $A(P)\leq A(P')$.
We say that a simple polygon $Q$ is a \emph{rearrangement} of $P$ if there is a bijection between the edge set of $Q$ and the edge set of $P$ such that corresponding edges are parallel and have the same length.
Note that for any rearrangement $Q$ of $P$ we have $b(P)=b(Q)$ and $b_*(P)=b_*(Q)$.
Among all rearrangements of $P$ fix one, $P'$, of maximum area.
We claim that $P'$ is convex.
If $P'$ is not convex, then it has two vertices $p$ and $q$ such that the boundary of $P'$ between $p$ and $q$ lies in the interior of the convex hull of $P'$.
If we replace this part of the boundary by its rotation by $180^\circ$ around the midpoint of $p$ and $q$, then we obtain a new simple polygon that is still a rearrangement of $P$, but with larger area.
\begin{figure}
    \centering
    \definecolor{wwwwww}{rgb}{0.5,0.5,0.8}
\begin{tikzpicture}[line cap=round,line join=round,>=triangle 45,x=1cm,y=1cm,scale=2.5]
\draw [line width=0.8pt,color=wwwwww] (-1.299038105676658,0.75)-- (-2.577738284548175,-0.009039078580062837);
\draw [line width=0.8pt,color=wwwwww] (-1.299038105676658,-1.75)-- (0.4330127018922193,-0.75);
\draw [line width=0.8pt,color=wwwwww] (0.4330127018922193,-0.75)-- (0.4330127018922193,-0.25);
\draw [line width=0.8pt,color=wwwwww] (0.4330127018922193,-0.25)-- (-1.299038105676658,0.75);
\draw [line width=0.8pt,color=wwwwww] (-2.5784539865037384,-0.009463920745462581)-- (-1.2988669845147736,0.7501015778766504);
\draw [line width=0.8pt,color=wwwwww] (-2.5784539865037384,-0.009463920745462581)-- (-2.577738284548175,-1.0090390785800625);
\draw [line width=0.8pt,color=wwwwww] (-2.5783385664466625,-1.0086912367356922)-- (-1.2982599871104685,-1.7504508918191333);
\draw [line width=0.8pt,color=wwwwww] (-1.2984407519793464,-1.7496551176820556)-- (0.43152351708770187,-0.7508597812477612);
\draw [line width=0.8pt,color=wwwwww] (0.4330127018922193,-0.7522891501191088)-- (0.4330127018922193,-0.2504734553841228);
\draw [line width=0.8pt,color=wwwwww] (-1.2993386140245669,0.7501734985755589)-- (0.43156815547079697,-0.2491659907347349);
\draw [line width=1.2pt] (-0.9781586665683353,-1.5647401694533936)-- (0.3015124589344671,-0.8259217006701594);
\draw [line width=1.2pt] (-0.98938751086056,0.571223145728198)-- (0.2480774300967321,-0.14322757104621875);
\draw [color=wwwwww](-0.3,0.33) node[anchor=north west] {$H$};
\draw (-2.792560451488531,0.13) node[anchor=north west] {$r_1$};
\draw (-2.8003408527730677,-0.9223242063932245) node[anchor=north west] {$r_2$};
\draw (-1.3375790661423126,-1.75) node[anchor=north west] {$r_3$};
\draw (0.42,-0.73) node[anchor=north west] {$r_4$};
\draw (0.4380725587374475,-0.1) node[anchor=north west] {$r_5$};
\draw (-1.3758680499672131,0.95) node[anchor=north west] {$r_6$};
\draw (0.17483579494125667,0.1) node[anchor=north west] {$p_6$};
\draw (-2.8308494353134317,-0.1230416690484299) node[anchor=north west] {$p_2$};
\draw (-1.7300411503475426,-1.6) node[anchor=north west] {$q_3$};
\draw (-1,-1.55) node[anchor=north west] {$p_4$};
\draw (0.22,-0.85) node[anchor=north west] {$q_4$};
\draw (0.47,-0.52) node[anchor=north west] {$p_5$};
\draw (-2.390526121327076,0.3842873666314996) node[anchor=north west] {$q_1$};
\draw (-0.95,0.74) node[anchor=north west] {$q_6$};
\draw (-2.15,0.7384604670118278) node[anchor=north west] {$p_7=p_1$};
\draw (-1.3040762052955246,-0.34320332604160686) node[anchor=north west] {$P'$};
\draw [shift={(-0.04249048800425103,-0.5759370311096139)},line width=1.2pt]  plot[domain=0.42390390675380274:1.0100513713385812,variable=\t]({1*0.5216767307610695*cos(\t r)+0*0.5216767307610695*sin(\t r)},{0*0.5216767307610695*cos(\t r)+1*0.5216767307610695*sin(\t r)});
\draw [shift={(0.17409511750684015,-0.5976077115173691)},line width=1.2pt]  plot[domain=5.203950402602928:6.226137888453543,variable=\t]({1*0.26156860984414165*cos(\t r)+0*0.26156860984414165*sin(\t r)},{0*0.26156860984414165*cos(\t r)+1*0.26156860984414165*sin(\t r)});
\draw [line width=1.2pt] (0.4330127018922193,-0.3613599404968493)-- (0.4330127018922193,-0.6123943354195447);
\draw [line width=1.2pt] (-2.4400431862482064,-1.0888284526625736)-- (-1.6066261233711758,-1.5717637685717705);
\draw [line width=1.2pt] (-2.577898898975092,-0.784719178532384)-- (-2.578312866726034,-0.20655689076723055);
\draw [shift={(-2.05515323250691,-0.3214796474927902)},line width=1.2pt]  plot[domain=2.067369558065851:2.9253566254116445,variable=\t]({1*0.5358318629319668*cos(\t r)+0*0.5358318629319668*sin(\t r)},{0*0.5358318629319668*cos(\t r)+1*0.5358318629319668*sin(\t r)});
\draw [shift={(-2.1720132949418964,-0.7872217430124655)},line width=1.2pt]  plot[domain=3.135427042531723:3.9858669713759087,variable=\t]({1*0.4058933189772516*cos(\t r)+0*0.4058933189772516*sin(\t r)},{0*0.4058933189772516*cos(\t r)+1*0.4058933189772516*sin(\t r)});
\draw (-2.7973465744666437,-0.68) node[anchor=north west] {$q_2$};
\draw (-2.6,-1.1) node[anchor=north west] {$p_3$};
\draw (0.47,-0.27) node[anchor=north west] {$q_5$};
\draw (-1.4015096810328753,-1.3500001676203403) node[anchor=north west] {$Q_4$};
\draw (0.10440804089748178,-0.22833637456690584) node[anchor=north west] {$Q_6$};
\draw (-1.4027929431642,0.6331657614933518) node[anchor=north west] {$Q_1$};
\draw (-2.502745720042539,0.010969774338721289) node[anchor=north west] {$Q_2$};
\draw (-2.481462457911215,-0.7548099021592856) node[anchor=north west] {$Q_3$};
\draw (0.13176068813824363,-0.55) node[anchor=north west] {$Q_5$};
\draw [line width=1.2pt] (-1.5950270580690447,0.5743003438913497)-- (-2.310431814428669,0.14963459655546396);
\begin{scriptsize}
\draw [fill=black] (-0.9724057394937612,0.5614187154582602) circle (0.6pt);
\draw [fill=black] (-1.5950270580690447,0.5743003438913497) circle (0.6pt);
\draw [fill=black] (-2.310431814428669,0.14963459655546396) circle (0.6pt);
\draw [fill=black] (-0.9528625341176387,-1.5501354405735277) circle (0.6pt);
\draw [fill=black] (0.29755628816173996,-0.8282057969307535) circle (0.6pt);
\draw [fill=black] (0.23425672761327815,-0.135248184747007) circle (0.6pt);
\draw [fill=black] (-1.6066261233711758,-1.5717637685717705) circle (0.6pt);
\draw [fill=black] (0.4330127018922193,-0.3613599404968493) circle (0.6pt);
\draw [fill=black] (0.4330127018922193,-0.6123943354195447) circle (0.6pt);
\draw [fill=black] (-2.578312866726034,-0.20655689076723055) circle (0.6pt);
\draw [fill=black] (-2.577898898975092,-0.784719178532384) circle (0.6pt);
\draw [fill=black] (-2.4400431862482064,-1.0888284526625736) circle (0.6pt);
\draw [shift={(-1.294964567911478,0.022339296748768746)},line width=1.2pt]  plot[domain=1.0315900459052108:2.0687356603703915,variable=\t]({1*0.6282123983704662*cos(\t r)+0*0.6282123983704662*sin(\t r)},{0*0.6282123983704662*cos(\t r)+1*0.6282123983704662*sin(\t r)});
\draw [shift={(-1.2991778217593082,-0.9648341525682973)},line width=1.2pt]  plot[domain=4.130907994781042:5.246673714994852,variable=\t]({1*0.6803587413608682*cos(\t r)+0*0.6803587413608682*sin(\t r)},{0*0.6803587413608682*cos(\t r)+1*0.6803587413608682*sin(\t r)});
\end{scriptsize}
\end{tikzpicture}
    \caption{The polygon $P'$ and its circumscribed hexagon $H$}
    \label{isolemfig}
\end{figure}
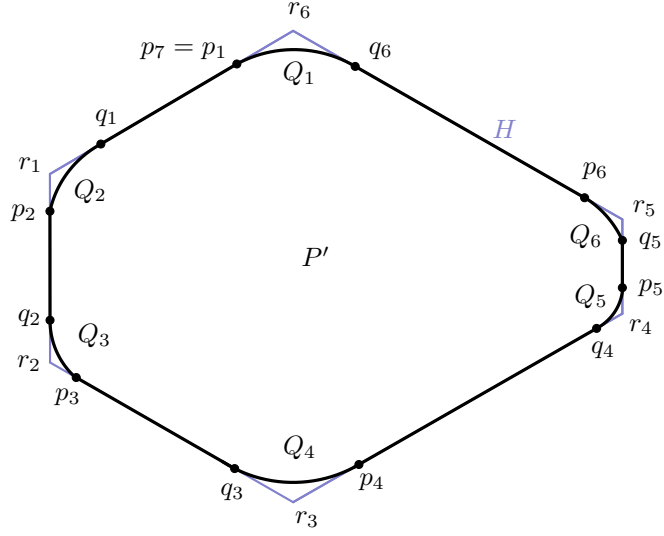
Let $H$ denote the hexagon with sides parallel to the sides of $h$ circumscribed around $P'$.
Number the sides of $H$ in order from $1$ to $6$.
For each $i=1,\dots,6$, let $p_i q_i$ be the segment of $P'$ on side $i$ of $H$, such that the points lie in the order $p_1,q_1,p_2,q_2,\dots,p_6,q_6$ around $P'$.
Denote the path of $P'$ between $q_i$ and $p_{i+1}$ by $Q_i$, $i=1,\dots,6$, where $p_7=p_1$.

Note that in a triangle $pqr$ with $\angle r=120^\circ$, the cosine rule gives
\begin{align*}
    |pq|^2 &= |pr|^2+|qr|^2-2|pr|\cdot|qr|\cos120^\circ\\
    &= |pr|^2+|qr|^2+|pr|\cdot|qr|\\
    &= \tfrac34(|pr|+|qr|)^2 + \tfrac14(|pr|-|qr|)^2\\
    &\geq \tfrac34(|pr|+|qr|)^2,
\end{align*}
hence $|pr|+|qr|\leq\frac{2}{\sqrt{3}}|pq|$.
We now apply this to each triangle $p_{i+1}q_ir_i$ to obtain an upper bound on the perimeter of $H$ in terms of $b$ and $b_*$:
\begin{align*}
    b &= \sum_{i=1}^6 |p_iq_i| +\sum_{i=1}^6 b(Q_i)\\
    &= b(H) -\sum_{i=1}^6|q_ir_i|-\sum_{i=1}^6|r_ip_{i+1}|+\sum_{i=1}^6 b(Q_i)\\
    &\geq b(H) -\sum_{i=1}^6|q_ir_i|-\sum_{i=1}^6|r_ip_{i+1}|+\sum_{i=1}^6|q_ip_{i+1}| \quad\text{by the triangle inequality}\\
    &\geq b(H) -\tfrac{2}{\sqrt{3}}\sum_{i=1}^6|q_ip_{i+1}|+\sum_{i=1}^6|q_ip_{i+1}| \quad\text{since $\angle r_i=120^\circ$}\\
    &= b(H) -(\tfrac{2}{\sqrt{3}}-1)\sum_{i=1}^6|q_ip_{i+1}|\\
    &\geq b(H) -(\tfrac{2}{\sqrt{3}}-1)\sum_{i=1}^6 b(Q_i)\quad\text{again by the triangle inequality}\\
    &= b(H) -(\tfrac{2}{\sqrt{3}}-1)b_*.
\end{align*}
By the isoperimetric inequality for hexagons (or L'Huilier's inequality for hexagons with sides parallel to those of a fixed hexagon) the hexagon with a fixed perimeter maximising the area is regular.
It follows that
\begin{align*}
A&\leq A(P')\leq A(H)\\
&\leq \tfrac{\sqrt{3}}{24}b(H)^2\quad\text{by the isoperimetric inequality for hexagons}\\
&\leq\tfrac{\sqrt{3}}{24}(b+(\tfrac{2}{\sqrt{3}}-1)b_*)^2,
\end{align*}
and Lemma~\ref{isoperimetric} follows.
\end{proof}
We now obtain an improved upper bound for $F$ from the above isoperimetric estimate, again bounding the area from below by using the lower bound on $f_3$ from Claim~\ref{f3bound}.
\begin{claim}\label{Fbound2}
$F < \frac16(\sqrt{12n-3}-\sqrt{12n_1-3}) + \frac12$.
\end{claim}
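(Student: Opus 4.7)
The plan is to combine the improved isoperimetric inequality (Lemma~\ref{isoperimetric}) with the earlier estimates on $b$, $b_*$, and $f_3$. I would choose the fixed regular hexagon in Lemma~\ref{isoperimetric} so that its sides are parallel to the three edge directions of the triangular lattice containing $G_1$. Every boundary edge of $G$ that also lies on the boundary of $G_1$ is then parallel to a side of this hexagon, so the total length of sides of the boundary polygon of $G$ not parallel to any side of the hexagon is at most $b_*$. Writing $c=\tfrac{2}{\sqrt{3}}-1$ and $\phi=\phi(n)$, Lemma~\ref{isoperimetric} applied to the boundary polygon of $G$ yields $8\sqrt{3}\,A\leq (b+cb_*)^2$. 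Because each inner triangle has area $\tfrac{\sqrt{3}}{4}$ and $f_3>\phi^2/6-F>0$ by Claim~\ref{f3bound} (positivity using $F<\phi-3\leq\phi^2/6$), the area satisfies $A>\tfrac{\sqrt{3}}{4}(\phi^2/6-F)$, hence
\[
\phi^2-6F<(b+cb_*)^2.
\]

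Next I would apply Claims~\ref{b-bound} and~\ref{b*upperbound} to the right-hand side. Writing $\delta=\sqrt{12n-3}-\sqrt{12n_1-3}$, these give $b+cb_*<(\phi-F)+c\delta$, so
\[
\phi^2-6F<(\phi-F+c\delta)^2.
\]
Since $F<\phi^2/6$, the elementary inequality $\sqrt{1-x}\geq 1-x$ for $x\in[0,1]$ gives $\sqrt{\phi^2-6F}\geq \phi-6F/\phi$. Taking square roots of the displayed inequality thus yields $\phi-6F/\phi<\phi-F+c\delta$, which rearranges to
\[
F<\frac{c\,\delta\,\phi}{\phi-6}.
\]

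It then remains to show that $\dfrac{c\,\delta\,\phi}{\phi-6}\leq \dfrac{\delta}{6}+\dfrac12$. Clearing denominators and using $1-6c=7-4\sqrt{3}>0$, this reduces to
\[
\phi\bigl(3+(1-6c)\,\delta\bigr)\geq 6(\delta+3).
\]
When $(1-6c)\phi\geq 6$, that is, $\phi\geq 6/(7-4\sqrt{3})=6(7+4\sqrt{3})\approx 83.6$, the coefficient of $\delta$ on the left already exceeds the one on the right and the inequality is automatic. For the intermediate range $\phi\in[38.85,\,6(7+4\sqrt{3}))$ (where the lower bound comes from Claim~\ref{Fbound1}), Claim~\ref{n1bound} gives $n_1>3n/4$, so $\delta<(\phi+3)-\tfrac{\sqrt{3}}{2}\sqrt{(\phi+3)^2-1}$, and what remains is a single-variable inequality in $\phi$ that can be verified by a direct numerical check. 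The main obstacle is precisely this final verification: the constant $1-6c\approx 0.072$ is very small, so the bound $F<c\,\delta\,\phi/(\phi-6)$ is only marginally smaller than $\delta/6$, and the additive constant $\tfrac12$ has to absorb the slack over the bounded $\phi$-range — exactly the balancing of asymptotics with constant terms flagged in the introduction.
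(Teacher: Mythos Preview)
Your argument is correct, and it reaches the same starting inequality as the paper: from Lemma~\ref{isoperimetric} together with Claims~\ref{b-bound}, \ref{f3bound}, and \ref{b*upperbound} one obtains $\phi^2-6F<(\phi-F+c\delta)^2$ with $c=\tfrac{2}{\sqrt3}-1$. From that point on, however, the two arguments diverge.

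The paper proceeds by contradiction: assuming $c\delta-F\leq-\tfrac12$, the right-hand side is at most $(\phi-\tfrac12)^2$, which after expanding yields $F>\tfrac16\sqrt{12n-3}-\tfrac{13}{24}$, contradicting Claim~\ref{Fbound1}. Hence $F<c\delta+\tfrac12<\tfrac16\delta+\tfrac12$ (using $c<\tfrac16$) in one step, with no case split and no further numerical check. You instead linearise the square root via $\sqrt{1-x}\geq 1-x$ to get $F<c\delta\phi/(\phi-6)$, and must then separately show $c\delta\phi/(\phi-6)\leq\tfrac16\delta+\tfrac12$; this forces a case split on $\phi$ and, for the range $\phi\in[38.85,\,6(7+4\sqrt3))$, an appeal to the bound on $\delta$ from Claim~\ref{n1bound} plus a numerical verification. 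That verification is genuinely easy (the slack is large --- at $\phi=38.85$ one needs $\delta\lesssim 31$ while $\delta<5.7$), so your concern that the constant $1-6c\approx0.072$ makes it delicate is overstated; but the paper's contradiction route sidesteps both the linearisation loss and the case analysis entirely, and in fact delivers the slightly sharper conclusion $F<c\delta+\tfrac12$.
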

\begin{proof}
As before, the area $A$ of the boundary polygon of $G$ is at least $\frac{\sqrt{3}}{4}f_3$.
If we substitute this, as well as the upper bounds for $b$ and $b_*$ from Claims~\ref{b-bound} and \ref{b*upperbound}, respectively, and the lower bound for $f_3$ from Claim~\ref{f3bound}, into Lemma~\ref{isoperimetric}, we obtain
\[12n+6-6\sqrt{12n-3}-6F \leq (b+cD)^2 < (\sqrt{12n-3}-3+c D - F)^2,\] where $D=\sqrt{12n-3}-\sqrt{12n_1-3}$ and $c=2/\sqrt{3}-1$.

Suppose that $c D-F \leq -1/2$.
Then \[12n+6-6\sqrt{12n-3}-6F < (\sqrt{12n-3} - 7/2)^2.\]
Multiplying out and simplifying, we obtain $F > \frac16\sqrt{12n-3} - \frac{13}{24}$, which contradicts the previous upper bound on $F$ from Claim~\ref{Fbound1}.

Therefore, $c D-F > -1/2$, which gives the claim.
\end{proof}

\subsection{The conclusion}
We now combine the new estimate for $F$ from Claim~\ref{Fbound2} with Claim~\ref{eq1} to obtain a contradiction.
Claim~\ref{Fbound2} together with $F\geq 2$ (Claim~\ref{Fgeq2}) imply that
\begin{equation}\label{*}
D:=\sqrt{12n-3}-\sqrt{12n_1-3} > 9.
\end{equation}
Again by Claim~\ref{Fbound2},
\[6F < D+3 < D+\frac13D = \frac43D\quad\text{by \eqref{*}}. \]
Then Claim~\ref{eq1} gives
\[ \sqrt{12(n-n_1)} < \frac43D + D = \frac73D = \frac73\cdot\frac{12(n-n_1)}{\sqrt{12n-3}+\sqrt{12n_1-3}}.\]
It follows that
\begin{equation}\label{**}
\frac37(\sqrt{12n-3}+\sqrt{12n_1-3}) < \sqrt{12(n-n_1)}.
\end{equation}
By Claim~\ref{n1bound}, and using that $n_1$ and $n$ are integers, we have $n_1 > \frac34n +\frac{1}{16}$, which implies $\sqrt{12n_1-3} > \frac{\sqrt{3}}{2}\sqrt{12n-3}$ and $\sqrt{12(n-n_1)} < \frac12\sqrt{12n-3}$.
Together with \eqref{**} we obtain the contradiction $\frac37(1+\frac{\sqrt{3}}{2}) < \frac12$.

This finishes the induction step and thereby the proof of Theorem~\ref{thm1}.
\qed

\end{document}